\documentclass[11pt,a4paper]{article}

\usepackage{amsmath}
\usepackage{amssymb,amsfonts,bm}
\usepackage{color,graphicx}
\usepackage{subcaption}
\usepackage{xspace}
\usepackage{fullpage}
\usepackage{placeins}
\usepackage{enumitem}
\usepackage{url}
\usepackage{verbatim}
\usepackage{amsthm}
\usepackage{commath}

\usepackage[top=2.5cm,bottom=2.5cm,left=2.5cm,right=2.5cm]{geometry}

\newcommand{\lela}{\left \langle}
  \newcommand{\rira}{\right \rangle}
\newtheorem{thm}{Theorem}
\newtheorem{prop}[thm]{Proposition}
\newtheorem{corollary}[thm]{Corollary}

\newtheorem{lemma}[thm]{Lemma}

\renewcommand{\norm}[1]{\| #1 \|}
\newcommand{\Refl}{\operatorname{Q}}

\def\<{\langle}
\def\>{\rangle}

\newcommand{\eps}{\varepsilon}
\newcommand{\R}{\mathbb R}

\def\accv{\bar{v}}
\def\accw{\bar{w}}

\def\J{R_{\pi/2}}

\title{Convergence and Cycling in Walker-type Saddle Search Algorithms}

\author{Antoine Levitt\thanks{
      Inria Paris, F-75589 Paris Cedex 12,
      Universit\'e Paris-Est, CERMICS (ENPC), F-77455 Marne-la-Vall\'ee,
      \texttt{antoine.levitt@inria.fr}.},
   Christoph Ortner\thanks{
      Mathematics Institute, University of Warwick, CV4 7AL Coventry, UK,
      {\tt c.ortner@warwick.ac.uk}.
      CO was supported by ERC Starting Grant 335120.}}

\begin{document}
\maketitle

\begin{abstract}
   Algorithms for computing local minima of smooth objective functions
   enjoy a mature theory as well as robust and efficient implementations.
   By comparison, the theory and practice of saddle search is destitute.
   In this paper we present results for idealized versions of the
   dimer and gentlest ascent (GAD) saddle search algorithms that show-case
   the limitations of what is theoretically achievable within the current
   class of saddle search algorithms: (1) we present an improved estimate
   on the region of attraction of saddles; and (2) we construct quasi-periodic solutions
   which indicate that it is impossible to obtain globally convergent variants
   of dimer and GAD type algorithms.
\end{abstract}

\section{Introduction}
\label{sec:intro}
The first step in the exploration of a molecular energy landscape is usually the
determination of energy minima, using an
optimization algorithm. There exists a large number of such algorithms,
backed by a rich and mature theory \cite{NocedalWright,
  ConnGouldToint}. Virtually all optimization algorithms in practical use today
feature a variety of rigorous global and local convergence guarantees, and
well-understood asymptotic rates.

As a second step, one typically determines the saddles between minima. They
represent a crude description of the transitions between minima (reactions) and
can be thought of as the edges in the graph between stable states of a molecule
or material system. If neighboring
minima are known, then methods of NEB or string type \cite{Jonsson:1998,
  Weinan:prb2002} may be employed. On the other hand when only one minimum is
known, then ``walker methods'' of the eigenvector-following
methodology such as the dimer algorithm
\cite{HenkJons:jcp1999} are required. This second class of methods is the focus
of the present work; for extensive reviews of the literature we refer to
\cite{OlsenKroes:jcp2004, weinan2011gentlest, Barkema:cms2001, techreport}.

Since saddles represent reactions, the determination of saddle points
is of fundamental importance in determining dynamical properties
of an energy landscape, yet the
state of the art of algorithms is very different from that for optimization: more
than 15 years after the introduction of the dimer method \cite{HenkJons:jcp1999}
(the most widely used walker-type saddle search scheme), finding saddle points
remains an art rather than a science. A common practice is to detect
non-convergence and restart the algorithm with a different starting point. A
mathematically rigorous convergence theory has only recently begun to emerge;
see \cite{ZhangDu:sinum2012, techreport} and references therein. To the best of
our knowledge all convergence results to date are {\em local}: convergence can
only be guaranteed if an initial guess is sufficiently close to a (index-1)
saddle. None of the existing saddle search algorithms come with the kind of
global convergence guarantees that even the most basic optimization algorithms
have.

The purpose of the present work is twofold: (1) We strengthen existing local
convergence results for dimer/GAD type saddle search methods by developing an
improved estimate on the region of attraction of index-1 saddle points that goes
beyond the linearized regime. (2) We produce new examples demonstrating generic
cycling in those schemes, and pathological behavior of idealized versions of
these algorithms. These results illustrate how fundamentally different saddle
search is from optimization. They suggest that a major new idea is required to
obtain globally convergent walker-type saddle search methods, and
support the idea of string-of-state methods being more robust.

\subsection{Local and global convergence in optimization}
\label{sec:optimization}
We consider the steepest descent method as
a prototype optimization algorithm. Given an energy landscape
$E \in C^2(\R^N)$, the gradient descent dynamics (or \emph{gradient
  flow}) is
\begin{equation}
  \label{eq:intro:steepest-descent-ode}
  \dot x = - \nabla E(x).
\end{equation}
This ODE enjoys the property that
\begin{align*}
  \frac{\dif }{\dif t} E(x) &= \lela \dot x, \nabla E(x)\rira
  =- \norm{\nabla E(x)}^{2}.
\end{align*}
If $E$ is bounded from below, it follows that $\nabla E(x) \to 0$ and,
under mild conditions (for instance, $E$ coercive with non-degenerate
critical points), $x$ converges to a critical point, that is
generically a minimum.

This  property can be transferred to the discrete iterates
of the steepest descent method
\begin{equation}
  \label{eq:intro:steepest-descent}
  x_{n+1} = x_n - \alpha_n \nabla E(x_n),
\end{equation}
under conditions on the step length $\alpha_{n}$ (for instance the Armijo
condition). In both cases, the crucial point for convergence is that $E(x(t))$
or $E(x_{n})$ is an objective function (also called merit or Lyapunov function)
that decreases in time.

\subsection{Eigenvector-following methods: the ISD and GAD}
\label{sec:intro:local-convergence-dimer}
If $x_*$ is a non-degenerate index-1 saddle, then the symmetric Hessian matrix $H_* = \nabla^2 E(x_*)$ has one
negative eigenvalue, while all other eigenvalues are positive. In this
case, the steepest descent dynamics \eqref{eq:intro:steepest-descent-ode}
is repelled away from $x_*$ along the mode corresponding to the
negative eigenvalue.

To obtain a dynamical system for which $x_*$ is
an attractive fixed point, we reverse the flow in the
direction of the unstable
mode. Let $v_1(x)$ be a normalized eigenvector corresponding to the smallest
eigenvalue of $\nabla^2 E(x)$, then for $\|x-x_{*}\|$ sufficiently
small, the direction
\begin{equation*}
   -(I - 2 v_{1}(x) \otimes v_{1}(x)) \nabla E(x)
\end{equation*}
points towards the saddle $x_{*}$. Note that this direction does not
depend on the arbitrary sign of $v_{1}$, and therefore in the rest of the paper
we will talk of ``the lowest eigenvector $v_{1}(x)$'' whenever the
first eigenvalue of $\nabla^{2}E(x)$ is simple.

 This is the essence of the eigenvector-following
methodology, which has many avatars (such as the dimer method
\cite{HenkJons:jcp1999}, the Gentlest Ascent Dynamics
\cite{weinan2011gentlest}, and numerous variants). In our analysis
we will consider the simplest such method, which we will call the
\textbf{Idealized Saddle Dynamics (ISD)},
\begin{align}
  \label{eq:ISD}
  \dot x = -(I - 2 v_{1}(x) \otimes v_{1}(x)) \nabla E(x).
\end{align}
Under this dynamics, a linear stability analysis shows that
non-degenerate index-1 saddle points are attractive, while
non-degenerate minima, maxima or saddle points of index greater than 1 are
repulsive (see Lemma
\ref{th:pos:isd-local}).

The ISD \eqref{eq:ISD} is only well-defined when $v_{1}(x)$ is determined
unambiguously, that is, when the first eigenvalue of $\nabla^{2}E(x)$ is
simple. The \textbf{singularities} of this flow where $\nabla^{2}E(x)$ has repeated first
eigenvalues will play an important role in this paper.

In practice, the {\em orientation} $v_{1}(x)$ has to be computed from
 $\nabla^{2}E(x)$. This
makes the method unattractive for many applications in which the
second derivative is not available or prohibitively expensive (for
instance, \textit{ab initio} potential surfaces, in which $E(x)$ and $\nabla E(x)$
are readily computed but $\nabla^{2} E(x)$ requires a costly perturbation
analysis). Because of this, the orientation is often relaxed and
computed in alternation with the {\em translation} \eqref{eq:ISD}.
A mathematically simple flavor of this approach is the

\textbf{Gentlest Ascent Dynamics (GAD)}: \cite{weinan2011gentlest}
\begin{align}
  \label{eq:GAD}
  \begin{split}
    \dot x &= -(I - 2 v \otimes v) \nabla E(x),\\
    \eps^2 \dot{v} &= - (I - v \otimes v) \nabla^2 E(x) v.
  \end{split}
\end{align}
At a fixed $x$, the dynamics for $v$ is a gradient flow for the
Rayleigh quotient $\lela v, \nabla^{2}E(x)v\rira$ on the unit sphere
$S_{1}$ in $\R^{N}$, which converges to the lowest eigenvector
$v_{1}(x)$. The parameter $\eps>0$ controls the speed of relaxation of
$v$ towards $v_{1}(x)$ relative to that of $x$. The ISD is formally
obtained in the limit $\eps \to 0$.

The practical advantage of the GAD \eqref{eq:GAD} over the ISD \eqref{eq:ISD} is
that, once discretized in time, it can be implemented using only the action of
$\nabla^{2}E(x)$ on a vector, which can be computed efficiently via finite
differences. This is the basis of the dimer algorithm \cite{HenkJons:jcp1999}.
The $\eps$ scaling is analogous to common implementations of the dimer
algorithm that adapt the number of rotations per step to ensure approximate
equilibration of $v$.

Using linearized stability analysis one can prove local convergence of the
ISD, GAD or dimer algorithms \cite{ZhangDu:sinum2012, techreport}. However, due to
the absence of a global merit function as in optimization, there is no natural Armijo-like condition to choose
the stepsizes in a robust manner, or indeed to obtain global
convergence guarantees (however, see \cite{techreport,GaoLengZhou} for
ideas on the construction of {\em local} merit functions).

In this paper, we only study the ISD and GAD dynamics: we expect that
the behavior we find applies to practical variants under appropriate conditions
on the parameters (for instance, the dimer algorithm with a
sufficiently small finite difference step and a sufficiently high
number of rotation steps per translation step).

\subsection{Divergence of ISD-type methods}
\label{sec:intro:non-convergence-results}
Even though dimer/GAD type methods converge locally under reasonable hypotheses,
global convergence is out of reach. We briefly summarize two examples from
\cite{techreport, weinan2011gentlest} to motivate our subsequent results.

One of the simplest examples is the 1D double-well \cite{techreport}
\begin{align}
   \label{eq:1Dexample}
  E(x) = (1-x^{2})^{2}.
\end{align}
On this one-dimensional landscape, the ISD \eqref{eq:ISD} is the
gradient ascent dynamics. It converges to the saddle at $x = 0$ if and only
if started with $|x_{0}| < 1$. If started from $|x_{0}| > 1$, it
will diverge to $\pm \infty$. This possible divergence is usually
accounted for in practice by starting the method with a random
perturbation from a minimum. Here, this means that the method
will converge $50\%$ of the time.

A natural extension, studied in \cite{weinan2011gentlest}, is the
2D double well
\begin{align}
   \label{eq:2Dexample}
  E(x,y) = (1-x^{2})^{2} + \alpha y^{2},
\end{align}
where $\alpha > 0$, which has a saddle at $(0,0)$
and minima at $(\pm 1, 0)$. At any $(x, y) \in \R^2$,
\begin{align*}
  \nabla^{2}E(x,y) &=
                   \begin{pmatrix}
                     4(3x^{2} - 1)&0\\
                     0&2\alpha
                   \end{pmatrix}.
\end{align*}
At $x = \pm r_{c}$, with $r_{c} = \sqrt{\frac{2 + \alpha}6},$
$\nabla^{2}E(x,y)$ has equal eigenvalues. As $x$ crosses $\pm r_{c}$,
$v_{1}(x)$ jumps: for
$|x| < r_{c}, v = \pm(1,0)$, while for $|x| > r_{c}$, $v = \pm(0,1)$.

The lines $\{ x = \pm r_c \}$ are a singular set for the ISD while, for
$|x| \neq r_c$ the ISD is given by
\begin{align*}
   \begin{pmatrix}
      \dot{x} \\ \dot{y}
   \end{pmatrix}
   = \sigma(x)
   \begin{pmatrix}
      4x (x^{2} - 1) \\
      - 2 \alpha y
   \end{pmatrix}
   \qquad \text{where} \qquad
   \sigma(x) =
   \begin{cases}
      1, & |x| < r_c, \\
      -1, & |x| > r_c.
   \end{cases}
\end{align*}
As $x$ approaches $\pm r_{c}$, $\dot x$ approaches
$\pm -4 r_{c}(r_{c}^{2} - 1)$. The resulting behavior of the system
depends on whether $r_{c}$ is greater or less than 1. For $r_{c} > 1$
($\alpha > 4$), the singular line is \emph{attractive}, while for
$r_{c} < 1$ ($\alpha < 4$), the line is
\emph{repulsive}. When the singular line is attractive, the solution
of the ISD stops existing in finite time (an instance of blowup). The resulting
phase portraits is shown in Figure~\ref{fig:double_well}.
Note that, for $\alpha < 4$, every trajectory started in
a neighborhood of the minima diverges. For $\alpha > 4$, trajectories
started from a random perturbation of a minimum converge $50\%$ of the
time.

\begin{figure}
  \centering
  \begin{subfigure}{0.44\textwidth}
     \includegraphics[width=\textwidth]{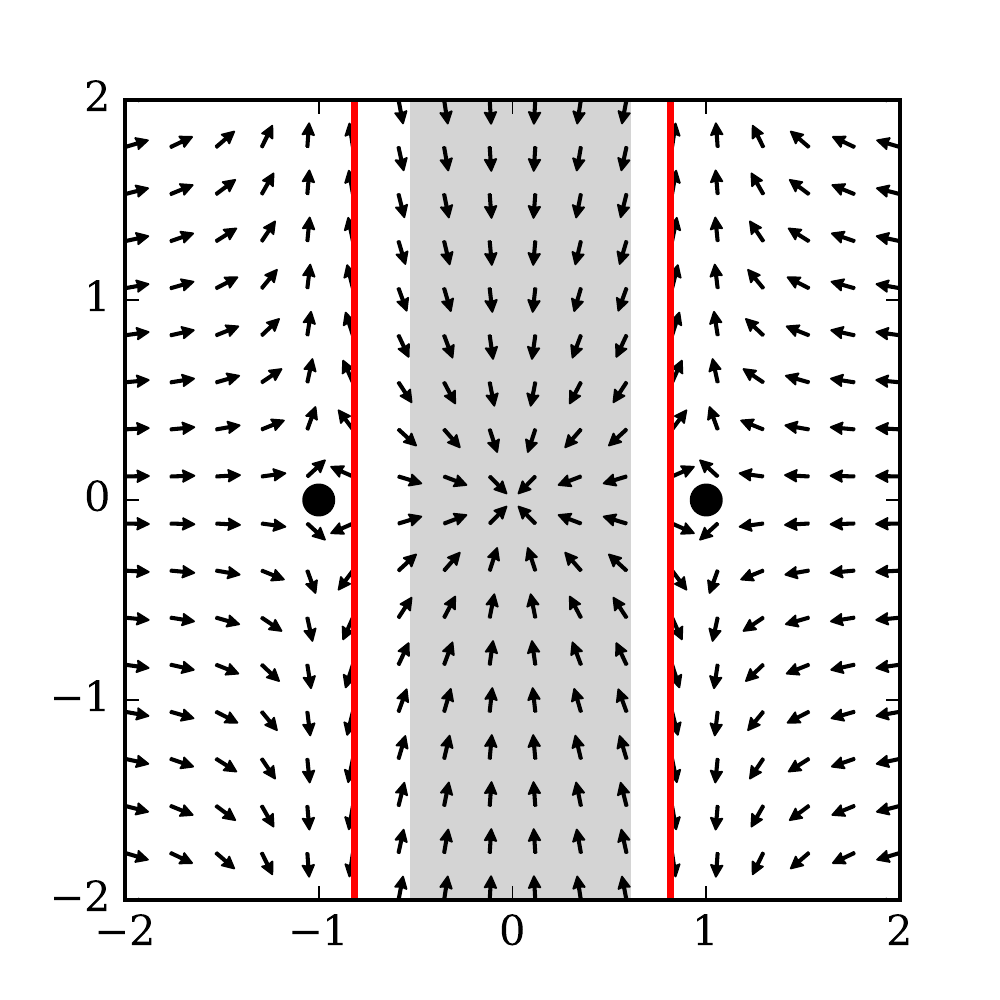}
     \caption{$\alpha = 2$}
  \end{subfigure}
  \quad
  \begin{subfigure}{0.44\textwidth}
     \includegraphics[width=\textwidth]{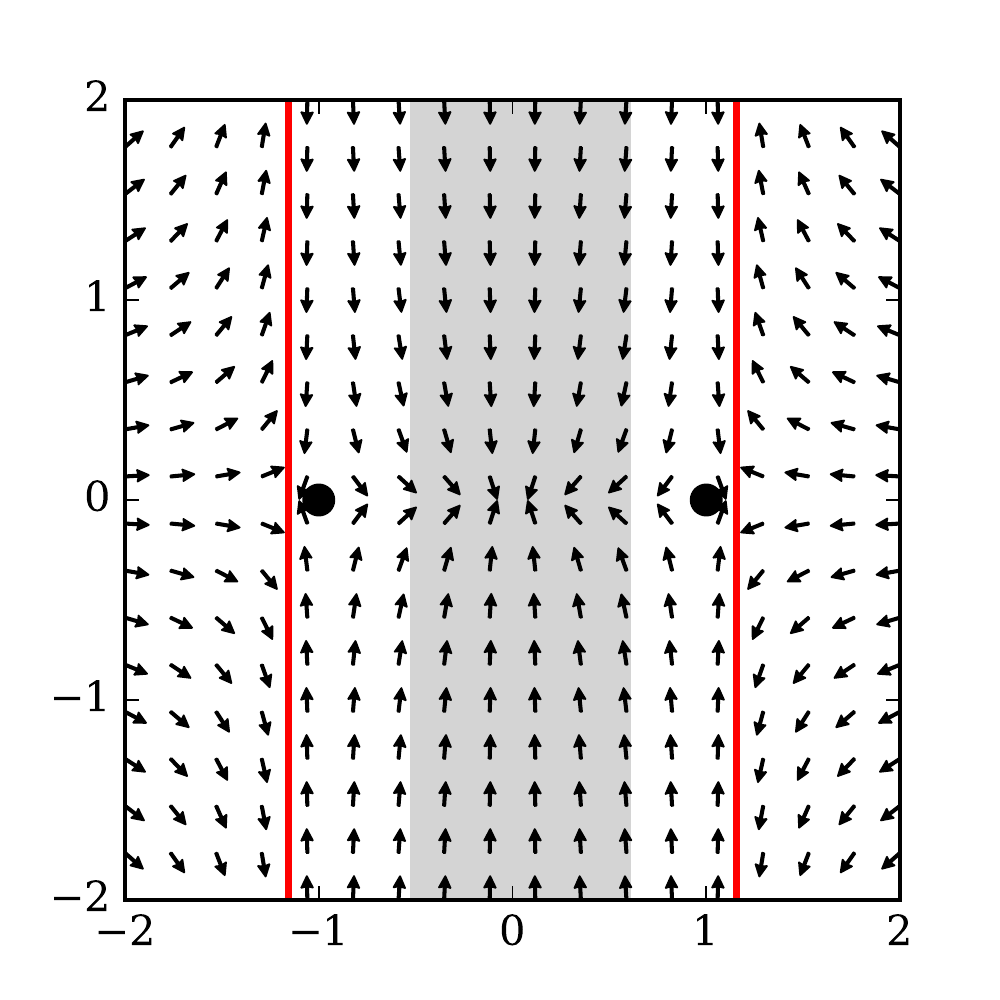}
     \caption{$\alpha=6$}
  \end{subfigure}

  \caption{$ E(x,y) = (1-x^{2})^{2} + \alpha y^{2}$.
     This energy landscape contains a saddle
    at $(0,0)$, two minima at $x = \pm 1, y = 0$ (black dots), and singularities
    at $x = \pm r_{c}$ (red line). Arrows indicate the direction of the ISD.
    The shaded region is the index-1 region
    where $\lambda_{1} < 0 < \lambda_{2}$.}
  \label{fig:double_well}
\end{figure}

This example shows the importance of singularities for the ISD. The
GAD, due to the lag in the evolution of $v$, does not adapt
instantaneously to the discontinuity of the first eigenvector. Instead
one expects that it will oscillate back and forth near a singularity, at least
for $\eps$ sufficiently small.

Neither of the two examples we discussed here is
{\em generic}: in the 1D example \eqref{eq:1Dexample} both ISD and GAD
reduce to gradient ascent, while in the 2D example
\eqref{eq:2Dexample} the set of singularities is a line, whereas we
expect point singularities; we will discuss this in detail in
\S~\ref{sec:isd:general}.

\begin{figure}[ht]
  \centering
  \includegraphics[width=0.6\textwidth]{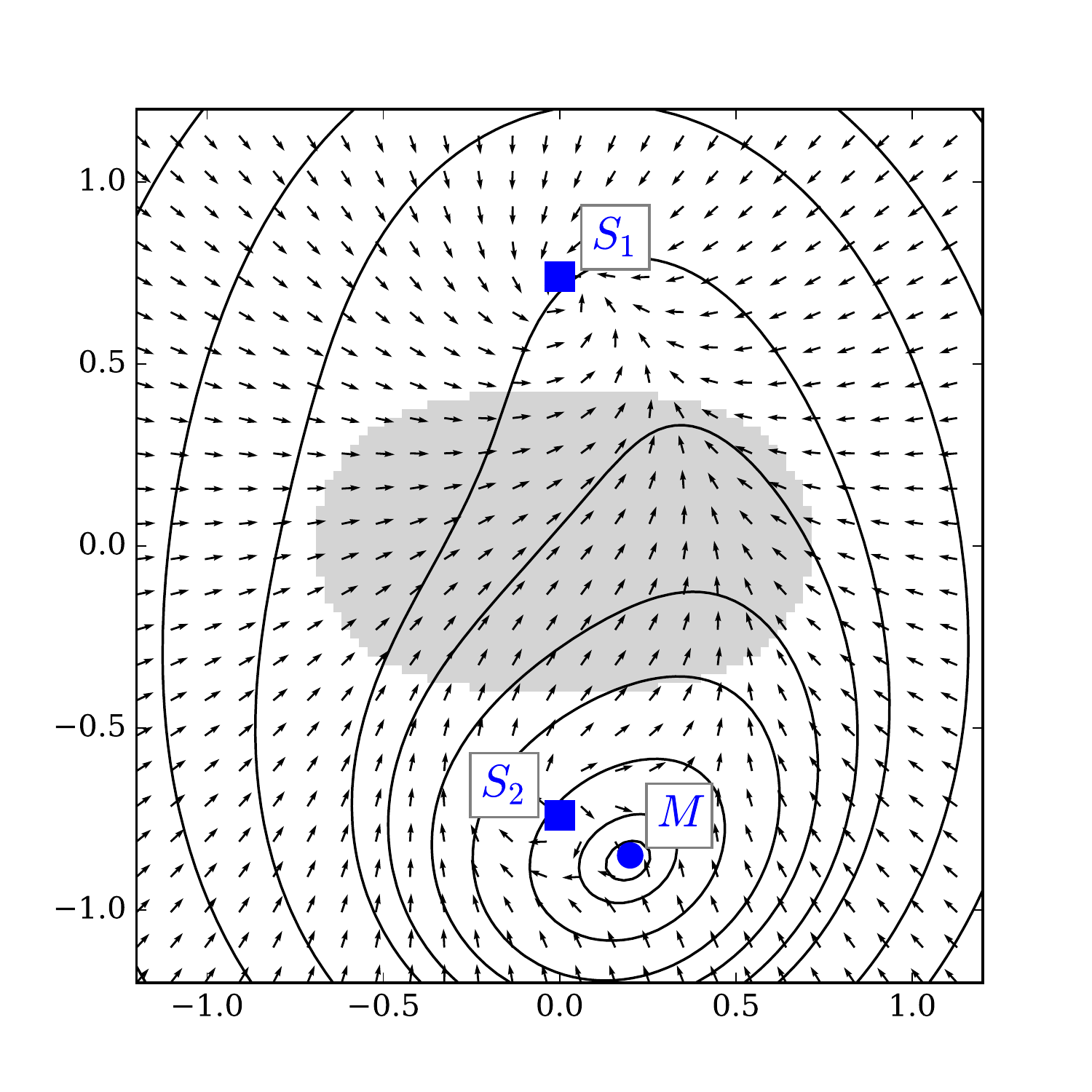}
  \caption{ $E(x,y) = (x^2 + y^2)^2 + x^2 - y^2 - x + y$, a coercive
    energy functional with a
    minimum at $M$, an attractive singularity at $S_{1}$ and a
    repulsive singularity at $S_{2}$.
    The shaded area is the index-1 region. The arrows indicate
    the direction of the ISD. The black lines are contour lines of $E$.
    The ISD remains trapped in this energy well, and trajectories
    converge to $S_{1}$.
    This example shows, in particular, that an index-1
    region is insufficient to guarantee even the existence of a saddle.
    }
  \label{fig:no_escape}
\end{figure}

\subsection{New Results: basin of attraction}
The basin of convergence of the saddle
$\{(x,y), |x| \leq \min(1,r_{c})\}$ for \eqref{eq:2Dexample}
is fairly large and in particular
includes the {\em index-1 region}
$\{(x,y), |x| \leq \frac 1 {\sqrt 3}\}$ where the first two
eigenvalues $\lambda_{1}(x)$ and $\lambda_{2}(x)$ of $\nabla^{2}E(x)$
satisfy $\lambda_{1}(x) < 0 < \lambda_{2}(x)$. This and other examples
motivate the intuition that, when started in such an index-1 region, the
ISD and GAD will converge to a saddle.

Our results in Section \ref{sec:pos} formalizes this intuition but with an added assumption: we
prove in Theorem \ref{th:pos:isd-region-of-attraction} that the ISD converges to a saddle
if it is started in a an index-1 region $\Omega$ that is a connected component
of a sublevel set for $\|\nabla E\|$. In Theorem
\ref{th:pos:isd-region-of-attraction} the same result is proven for the GAD,
under the additional requirement that $\|v(0) - v_1(x(0))\|$ and $\eps$ are
sufficiently small.

These results give some credence to the importance of index-1 regions,
but only guarantee convergence under a (strong) additional
hypothesis. We show in Figure~\ref{fig:no_escape} an index-1 region
with no saddles inside, demonstrating the importance of this
additional hypothesis.

\subsection{New Results: singularities and quasi-periodic orbits}
\label{sec:intro:outline}
%


\def\Sing{\mathcal{S}}

Let $E \in C^2(\R^N)$ and, for $x \in \R^N$, let
$\lambda_1(x) \leq \lambda_2(x)$ denote the two first eigenvalues of
$\nabla^2 E(x)$ and $v_1(x), v_2(x)$ the associated
eigenvectors.  The set at which eigenvalues cross is the set of
singularities
\begin{displaymath}
  \Sing := \big\{ x \in \R^N \,|\, \lambda_1(x) = \lambda_2(x) \big\}.
\end{displaymath}
Note that $v_1(x)$ is well-defined only for
$x \in \R^N \setminus \Sing$. Accordingly, the ISD is defined only
away from $\Sing$.

In Section \ref{sec:singularities}, we study the local structure of
singularities in 2D.  We first show that, unlike in
\S~\ref{sec:intro:non-convergence-results}, singularities are
generically isolated, and stable with respect to perturbations of the
energy functional. We then examine the ISD around isolated
singularities, in particular classifying attractive singularities such
as $S_{1}$ in Figure \ref{fig:no_escape},
which give rise to finite-time blow-up of the ISD.

For such attractive singularities, the GAD does not have time to
adapt to the rapid fluctuations of $v_{1}(x)$ and oscillates around
the singularity.  For $\eps$ small, we prove in special cases that the
resulting behavior for the GAD is a stable annulus of radius $O(\eps)$
and of width $O(\eps^{2})$ around the singularity. We call such a
behavior ``quasi-periodic''.  Our main result is Theorem
\ref{thm:quasi-periodic-GAD}, which generalizes this to the
multi-dimensional setting and proves stability with respect to
arbitrary small perturbations of the energy functional $E$.

\subsection{Notation}
We call $N \geq 1$ the dimension of the ambient space, and
$(e_{i})_{1\leq i \leq N}$ the vectors of the canonical basis. For a
matrix $M$, we write
$\norm{M}_{\text{op}} = \sup_{x \in S_{1}} \norm {Mx}$ its operator
norm, where $S_1$ denotes the unit sphere in $\mathbb{R}^N$.
In our notation, $I$ is the identity matrix and scalars may be interpreted as
matrices. Matrix inequalities are to be understood in the sense of
symmetric matrices: thus, for instance, when $\lambda \in \R$,
$M \geq \lambda$ and $M - \lambda \geq 0$ both mean that
$\lela x, M x \rira \geq \lambda \norm{x}^{2}$ for all $x \in \R^{N}$. When $A$
is a third-order tensor and $u, v, w \in \R^{N}$, we write $A[u]$ for
the contracted matrix $(A[u])_{ij} = \sum_{k=1}^{N} A_{ijk} u_{k}$,
and similarly $A[u,v]$ and $A[u,v,w]$ for the contracted vector and
scalar.

$E$ will always denote an energy functional defined on
$\R^{N}$. We will write $\nabla^{k}E(x)$ for the $k$-th-order tensor
of derivatives at $x$. $\lambda_{i}(x)$ and $v_{i}(x)$ refer to the
$i$-th eigenvalue and eigenvector (whenever this makes sense) of
$\nabla^{2}E(x)$.

A $2 \times 2$ matrix representing a rotation of angle $\omega$ will
be denoted by $R_\omega$.


%

\section{Region of attraction}
\label{sec:pos}
%

\subsection{Idealized dynamics}
\label{sec:pos:ISD}
We first consider the ISD \eqref{eq:ISD}, and prove local
convergence around non-degenerate index-1 saddles.
\begin{lemma}
  \label{th:pos:isd-local}
  (a) Let $E \in C^3(\R^N)$ and $\lambda_1(x_{*}) < \lambda_2(x_{*})$
  for some $x_{*} \in \R^{N}$, then
  \begin{displaymath}
    F_{\rm ISD}(x) := -(I - 2 v_1(x) \otimes v_1(x)) \nabla E(x)
  \end{displaymath}
  is $C^1$ in a neighborhood of $x_{*}$.

  (b) If $x_* \in \R^N$ is an index-1 saddle, then $\nabla F_{\rm ISD}(x_*)$
  is symmetric and negative definite. In particular, $x_*$ is exponentially
  stable under the ISD \eqref{eq:ISD}.
\end{lemma}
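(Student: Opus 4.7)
For part (a), the plan is to reduce smoothness of $F_{\rm ISD}$ to smoothness of the rank-one spectral projector $P_1(x) := v_1(x) \otimes v_1(x)$, since $\nabla E$ is $C^{2}$ by assumption and $F_{\rm ISD} = -(I - 2 P_1(x))\nabla E(x)$ is a product. Note that $P_1(x)$ is sign-invariant and therefore well-defined even though $v_1(x)$ is only defined up to a sign. Since $\lambda_1(x_*) < \lambda_2(x_*)$, one can pick a small positively oriented circle $\gamma \subset \mathbb{C}$ isolating $\lambda_1(x_*)$ from the rest of the spectrum of $\nabla^2 E(x_*)$; by continuity, $\gamma$ still isolates $\lambda_1(x)$ for $x$ in some neighborhood $U$ of $x_*$. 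On $U$, standard analytic perturbation theory gives the resolvent representation
\begin{equation*}
  P_1(x) = -\frac{1}{2\pi i} \oint_\gamma \bigl(z - \nabla^2 E(x)\bigr)^{-1}\,dz,
\end{equation*}
and because $\nabla^2 E$ is $C^1$ on $U$, differentiation under the integral sign shows $P_1 \in C^1(U)$. Multiplying by the $C^1$ vector field $\nabla E$ then yields $F_{\rm ISD} \in C^1(U)$.

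For part (b), I would compute $\nabla F_{\rm ISD}(x_*)$ directly using the product rule. Since $\nabla E(x_*) = 0$ at a saddle, every term in the derivative that multiplies $\nabla E(x_*)$ drops out, leaving
\begin{equation*}
  \nabla F_{\rm ISD}(x_*) = -\bigl(I - 2 P_1(x_*)\bigr)\,H_*, \qquad H_* := \nabla^2 E(x_*).
\end{equation*}
This is the key identity; after that the rest is linear algebra. The crucial observation is that $v_1(x_*)$ is an eigenvector of $H_*$, so $P_1(x_*)$ commutes with $H_*$, and hence the product $(I-2P_1(x_*))H_*$ is symmetric (the two factors being symmetric and commuting).

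To check negative-definiteness, I use the spectral decomposition of $H_*$. On the line spanned by $v_1(x_*)$, the operator $-(I - 2 P_1(x_*))H_*$ acts as multiplication by $-(-1)\lambda_1 = \lambda_1 < 0$; on the orthogonal complement, where $P_1(x_*)$ vanishes, it acts as $-H_*|_{v_1^\perp}$, whose eigenvalues $-\lambda_2,\ldots,-\lambda_N$ are all negative because $x_*$ has index one. Hence $\nabla F_{\rm ISD}(x_*)$ is symmetric negative definite; exponential stability of $x_*$ under \eqref{eq:ISD} is then an immediate consequence of the Hartman--Grobman / linearized stability theorem for ODEs with a $C^1$ right-hand side.

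The main subtlety is part (a): one must justify that the sign ambiguity in $v_1$ does not prevent $C^1$ regularity, and the resolvent/contour-integral route is the cleanest way to bypass this issue without having to track a smooth choice of $v_1(x)$. Once $P_1 \in C^1$ is in hand, both statements fall out essentially algebraically.
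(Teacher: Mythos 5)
Your proposal is correct and follows essentially the same route as the paper: part (a) is the standard spectral perturbation argument (your Riesz-projector contour integral is just a careful implementation of what the paper calls a ``straightforward perturbation argument,'' with the added benefit of cleanly handling the sign ambiguity of $v_1$), and part (b) is the identical computation that the terms carrying $\nabla E(x_*)$ vanish, leaving $-(I-2v_1\otimes v_1)\nabla^2E(x_*)$, whose symmetry and negative definiteness you verify in slightly more detail than the paper does.
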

\begin{proof}
  The proof of (a) follows from a straightforward perturbation argument for the
  spectral decomposition, given the spectral gap $\lambda_1 < \lambda_2$. As
  part of this proof one obtains that $x \mapsto v_1(x) \in C^1$.

  To prove (b), we observe that, since $\nabla E(x_*) = 0$,
  \begin{align*}
    \nabla F_{\rm ISD}(x_*)[h] &= -(I - 2 v_1 \otimes v_1) \nabla^2 E(x_*)[h]
                                   + 2 \nabla v_1[h] \lela v_1,
                                   \nabla E(x_*)\rira
                                   + 2 (v_1 \otimes \nabla v_1[h]) \nabla E(x_*) \\
    &= -(I - 2 v_1 \otimes v_1) \nabla^2 E(x_*) [h].
  \end{align*}
  Therefore, $\nabla F_{\rm ISD}(x_*)$, is symmetric and negative definite,
  which implies the result.
\end{proof}

Next we give an improved estimate on the ISD region of attraction of an index-1
saddle.

\begin{thm}
  \label{th:pos:isd-region-of-attraction}
  Let $E \in C^3(\R^N)$, $L > 0$ a level and let $\Omega \subset \R^N$
  be a closed connected
  component of $\{ x \in \R^N \, | \, \|\nabla E(x) \| \leq L \}$
  which is bounded (and therefore compact). Suppose,
  further, that $\lambda_1(x) < 0 < \lambda_2(x)$ for all $x \in \Omega$.

  Then, for all $x_{0} \in \Omega$, the ISD \eqref{eq:ISD} with initial
  condition $x(0) = x_0$ admits a unique global solution
  $x \in C^1([0, \infty); \Omega)$. Moreover, there exist an index-1 saddle
  $x_* \in \Omega$ and constants $K, c > 0$ such that
  \begin{displaymath}
    \norm{x(t) - x_*} \leq K e^{-c t}.
  \end{displaymath}
\end{thm}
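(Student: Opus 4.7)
The plan is to promote $V(x) := \tfrac{1}{2}\|\nabla E(x)\|^2$ to a strict Lyapunov function on $\Omega$. Compactness of $\Omega$ combined with the index-1 assumption yields uniform spectral bounds that let $V$ serve two purposes at once: it traps trajectories in $\Omega$ and forces exponential decay of the gradient.

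First, since the spectral gap $\lambda_1 < 0 < \lambda_2$ holds throughout $\Omega$, Lemma~\ref{th:pos:isd-local}(a) gives $F_{\rm ISD} \in C^1$ in a neighborhood of $\Omega$ and Picard--Lindel\"of yields a unique local solution for every $x_0 \in \Omega$. The crucial calculation is the derivative of $V$ along the flow: writing $g := \nabla E(x) = g_1 v_1 + g_\perp$ with $g_\perp \perp v_1$, the reflection produces $\dot x = g_1 v_1 - g_\perp$, and using that $v_1$ is an eigenvector of the symmetric $\nabla^2 E$ the cross terms in $\langle \nabla^2 E\,\dot x, g\rangle$ vanish, leaving
\begin{equation*}
  \dot V = \lambda_1(x) g_1^2 - \langle \nabla^2 E(x)\, g_\perp, g_\perp\rangle \leq -2\mu V,
\end{equation*}
where $\mu := \min(c_1, c_2) > 0$ with $-\lambda_1(x) \geq c_1$ and $\lambda_2(x) \geq c_2$ uniformly on the compact set $\Omega$ (the latter controls $\langle \nabla^2 E\, g_\perp, g_\perp\rangle$ because $g_\perp \perp v_1$).

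Next, I note that $\partial \Omega \subset \{\|\nabla E\| = L\}$ since any point of $\Omega$ with $\|\nabla E\| < L$ lies in the interior of its connected component. Combined with the monotonicity of $V$, this rules out escape from $\Omega$, giving forward invariance and hence global existence of the solution, which stays in the compact set $\Omega$ for all time. Gronwall then yields $\|\nabla E(x(t))\| \leq \|\nabla E(x_0)\|\, e^{-\mu t}$. Since the reflection $I - 2 v_1 v_1^T$ is an isometry, $\|\dot x\| = \|\nabla E(x)\|$ is exponentially small and integrable, so $x(t)$ is Cauchy; its limit $x_* \in \Omega$ is a critical point by continuity, and since $\lambda_1(x_*) < 0 < \lambda_2(x_*)$ it is an index-1 saddle. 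Integrating the bound on $\|\dot x\|$ from $t$ to $\infty$ gives the claimed rate $\|x(t) - x_*\| \leq K e^{-\mu t}$.

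The main conceptual obstacle is the forward invariance step: as Figure~\ref{fig:no_escape} demonstrates, an index-1 region in isolation need not trap ISD trajectories or even contain a saddle, so the sublevel-set structure of $\Omega$ is indispensable --- it is precisely what lets a single object $V$ certify both confinement and convergence. The algebraic reason $V$ decreases is the sign pattern in $\dot x = g_1 v_1 - g_\perp$: the reflection flips exactly the $v_1$-component, which turns the negative eigenvalue $\lambda_1$ into a negative contribution to $\dot V$ and preserves the already-negative contribution from the positive eigenvalues $\lambda_2,\dots,\lambda_N$.
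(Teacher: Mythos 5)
Your proposal is correct and rests on the same central idea as the paper's proof: $\|\nabla E(x)\|^2$ is a strict Lyapunov function on $\Omega$, decaying at rate $2\min(-\lambda_1,\lambda_2)$, which simultaneously traps the trajectory in the sublevel-set component and drives the gradient to zero exponentially. Your computation of $\dot V$ via the decomposition $g = g_1 v_1 + g_\perp$ is just the eigenbasis form of the paper's quadratic-form bound $\langle \nabla E, \nabla^2 E (I-2v_1\otimes v_1)\nabla E\rangle \geq \min(-\lambda_1,\lambda_2)\|\nabla E\|^2$. The only genuine (minor) divergence is the endgame: the paper extracts a convergent subsequence by compactness and then invokes the local exponential stability of Lemma~\ref{th:pos:isd-local}(b), whereas you observe that the reflection is an isometry, so $\|\dot x\| = \|\nabla E(x)\|$ decays exponentially and is integrable, making $x(t)$ Cauchy and yielding the rate $\|x(t)-x_*\|\leq Ke^{-\mu t}$ directly. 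Your route is self-contained and avoids the lemma; the paper's is shorter given that the lemma is already established.
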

\begin{proof} The result is based on the observation that, if
  $x \in C^1([0, T])$ solves the ISD \eqref{eq:ISD}, then for
  $ 0 < t < T$,
  \begin{align*}
    \frac{d}{dt} \norm{\nabla E(x)}^{2}
    &= 2 \lela \frac d {dt} \nabla
      E(x), \nabla E(x)\rira \\
    &= 2 \lela \nabla^{2} E(x) \dot x, \nabla E(x)\rira\\
    &= -2 \lela \nabla E(x),\;  \nabla^{2}E(x)(I - 2 v \otimes v)\nabla
      E(x)\rira\\
    &\leq - 2 \min(-\lambda_{1}, \lambda_{2}) \norm{\nabla E(x)}^{2}.
  \end{align*}
  It follows that $\Omega$ is a stable region for the ISD. Since $\Omega$ is
  bounded and $\Sing \cap \Omega = \emptyset$, if $x(0) \in \Omega$, then
  \eqref{eq:ISD} has a global solution $x \in C^1([0, \infty); \Omega)$.

  Because $\Omega$ is compact, $\inf_{x \in \Omega} -2 \min(-\lambda_{1}, \lambda_{2}) > 0$. It follows that
  $\nabla E(x(t)) \to 0$ with an exponential rate. Again by compactness, there
  exists $x_* \in \Omega$ and a subsequence $t_n \uparrow \infty$ such that
  $x(t_n) \to x_*$. Since $\nabla E(x(t)) \to 0$, we deduce $\nabla E(x_*) = 0$.
  Since $\lambda_1(x_*) < 0 < \lambda_2(x_*)$ it follows that $x_*$ is an
  index-1 saddle.

  Since we have now shown that, for some $t > 0$, $x(t)$ will be arbitrarily
  close to $x_*$, the exponential convergence rate follows from Lemma
  \ref{th:pos:isd-local}.
\end{proof}


\subsection{Gentlest Ascent Dynamics}

The analogue of Theorem \ref{th:pos:isd-region-of-attraction} for the GAD \eqref{eq:GAD}
requires that the relaxation of the rotation is sufficiently fast and that the
initial orientation $v(0)$ is close to optimal.

\begin{thm}
  \label{th:pos:gad-region-of-attraction}
  Assume the same prerequisites as Theorem
  \ref{th:pos:isd-region-of-attraction}.


  Then, for $\eps, \delta > 0$ sufficiently small, the GAD
  \eqref{eq:GAD} with any initial condition
  $x(0) = x_0 \in {\rm int}(\Omega)$ and $v_{0} \in S_{1}$ such that $\norm{v(0) - v_1(x_0)} <
  \delta$  admits a unique global solution
  $(x, v) \in C^1([0, \infty); \Omega \times S_1)$.  Moreover, there exists an
  index-1 saddle $x_* \in \Omega$, and constants $K, c > 0$ such that
  \begin{displaymath}
    \norm{x(t) - x_*} + \norm{v(t) - v_1(x_*)} \leq  K e^{-c t}.
  \end{displaymath}
\end{thm}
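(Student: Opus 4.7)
The plan is to treat the GAD as a singular perturbation of the ISD. As $\eps \to 0$, the $v$-equation becomes infinitely fast relative to the $x$-equation and drives $v$ towards the slow manifold $\{v = v_1(x)\}$, on which the reduced flow is precisely the ISD \eqref{eq:ISD}. Accordingly, the strategy is to show that $v(t)$ tracks $v_1(x(t))$ closely enough that the Lyapunov estimate from the proof of Theorem \ref{th:pos:isd-region-of-attraction} still yields exponential decay of $\|\nabla E(x)\|$ up to a controllable error, and then to recover the conclusion almost verbatim.

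The first step is to study $u(t) := v(t) - v_1(x(t))$. Using \eqref{eq:GAD} and differentiating $v_1$ along $\dot x$, one should be able to derive a differential inequality of the form
\begin{equation*}
   \eps^2 \frac{d}{dt} \tfrac12 \|u\|^2 \leq -\bigl(\lambda_2(x) - \lambda_1(x)\bigr) \|u\|^2 + C \|u\|^3 + \eps^2 \|\nabla v_1(x)\|_{\rm op} \|\dot x\| \|u\|.
\end{equation*}
The leading term reflects that the $v$-equation at fixed $x$ is the gradient flow on $S_1$ of the Rayleigh quotient $\lela v, \nabla^2 E(x) v \rira$, with linearization of rate $\lambda_2 - \lambda_1$ at $v_1$; the last term accounts for drift of the slow manifold as $x$ moves. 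The gap $\lambda_2 - \lambda_1$ is bounded below by some $g > 0$ on the compact set $\Omega$, so a Gronwall-type bootstrap should show that whenever $\|u(0)\| < \delta$ and $\eps, \delta$ are small enough, $\|u(t)\|$ stays small, say $\|u(t)\| \leq 2\delta$, for as long as $x(t) \in \Omega$.

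With control of $\|u\|$ in hand, I would redo the computation of $\frac{d}{dt} \|\nabla E(x)\|^2$ from Theorem \ref{th:pos:isd-region-of-attraction}, using the expansion $2 v \otimes v = 2 v_1 \otimes v_1 + 2(v_1 \otimes u + u \otimes v_1) + O(\|u\|^2)$, to obtain
\begin{equation*}
   \frac{d}{dt} \|\nabla E(x)\|^2 \leq -2 \min(-\lambda_1, \lambda_2) \|\nabla E\|^2 + C' \|u\|\, \|\nabla^2 E\|_{\rm op} \|\nabla E\|^2.
\end{equation*}
Choosing $\delta$ (and hence the allowed $\eps$) small enough absorbs the correction into the ISD term, giving strict exponential decay of $\|\nabla E(x(t))\|^2$. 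Combined with $x_0 \in {\rm int}(\Omega)$, this decay provides a margin that prevents the trajectory from ever leaving $\Omega$; since $\Omega \cap \Sing = \emptyset$, the solution exists globally. Compactness then produces an accumulation point $x_*$ with $\nabla E(x_*) = 0$ and $\lambda_1(x_*) < 0 < \lambda_2(x_*)$, i.e.\ an index-1 saddle.

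The quantitative exponential rate in both $x$ and $v$ would follow from a local analysis at $(x_*, v_1(x_*))$: because $\nabla E(x_*) = 0$, the Jacobian of the GAD vector field is block triangular in the $(x, v)$-splitting, with slow block $\nabla F_{\rm ISD}(x_*)$ (negative definite by Lemma \ref{th:pos:isd-local}) and fast block equal to $-\eps^{-2}(\lambda_2(x_*) - \lambda_1(x_*)) I$ on $v_1(x_*)^\perp$; all eigenvalues have strictly negative real part, so $(x_*, v_1(x_*))$ is an exponentially stable equilibrium. The main obstacle is the first step: the bound on $\|u\|$ must be propagated for all time, uniformly over all initial data satisfying the hypotheses, while $v_1(x(t))$ itself may drift at an $O(1)$ rate. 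Pinning down the correct smallness relationship between $\delta$ and $\eps$ to close the bootstrap is where most of the technical work is concentrated.
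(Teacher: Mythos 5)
Your proposal is correct and follows essentially the same route as the paper's proof: control $\|v - v_1(x)\|$ via a Gronwall/bootstrap argument using the spectral gap $\lambda_2 - \lambda_1 \geq 2g$ on the compact set $\Omega$, feed this back into the $\frac{d}{dt}\|\nabla E(x)\|^2$ Lyapunov estimate from Theorem~\ref{th:pos:isd-region-of-attraction}, conclude forward invariance and global existence, and finish with compactness plus local exponential stability at the limit saddle. The paper's Steps~1--3 in Appendix~\ref{appendix:region-of-attraction} are precisely the two differential inequalities you write (with the drift term $\langle v - v_1, \dot v_1\rangle$ controlled by the Moore--Penrose pseudo-inverse formula for $\dot v_1$), combined via a first-exit-time contradiction argument.
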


The proof of this result, which is more technical but at its core follows the
same idea as Theorem \ref{th:pos:isd-region-of-attraction}, can be found in
Appendix \ref{appendix:region-of-attraction}. The additional ingredient is to
control $\norm{v(t) - v_1(x(t))}$, using smallness of $\eps$ and the separation
of the eigenvalues $\lambda_1 < 0 < \lambda_2$ in $\Omega$.

\subsection{An example of global convergence and benchmark problem}
\label{sec:benchmark}
An immediate corollary of Theorem \ref{th:pos:gad-region-of-attraction}
is the following result.

\begin{corollary} \label{th:global-convergence}
  Suppose that $E \in C^3(\R^N)$ has the properties
  \begin{align*}
    & \lambda_1(x) < 0 < \lambda_2(x) \qquad \forall x \in \R^N, \\
    & \| \nabla E(x) \| \to \infty \qquad \text{as } |x| \to \infty.
  \end{align*}
  Then, for every $r > 0$ there exists $\eps_r, \delta_r > 0$ such that the
  $\eps$-GAD \eqref{eq:GAD} with $\eps \leq \eps_r$ and initial conditions
  satisfying $\| x(0) \| \leq r$ and $\|v(0) - v_1(x(0))\| \leq \delta_r$ has a
  unique global solution $(x(t), v(t))$ which converges to an index-1 saddle.
\end{corollary}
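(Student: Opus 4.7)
The plan is to reduce the corollary to a direct application of Theorem \ref{th:pos:gad-region-of-attraction} by constructing, for each $r > 0$, a single admissible region $\Omega$ that contains the closed ball $\bar B_r(0)$ in its interior. Once such an $\Omega$ is in hand, the theorem produces parameters $\eps_\Omega, \delta_\Omega > 0$ that serve as the desired $\eps_r, \delta_r$, and the conclusion (global existence, convergence to an index-1 saddle in $\Omega$) follows immediately.

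First I would define
\begin{equation*}
  L := 1 + \max_{\|x\| \leq r} \|\nabla E(x)\|,
\end{equation*}
which is finite by continuity of $\nabla E$ and compactness of $\bar B_r(0)$. By construction $\bar B_r(0) \subset \{ x \in \R^N : \|\nabla E(x)\| < L \}$, which is an open set. Let $\Omega$ be the connected component of the closed sublevel set $\{\|\nabla E\| \leq L\}$ that contains $\bar B_r(0)$; since $\bar B_r(0)$ is connected and lies in the open sublevel set $\{\|\nabla E\| < L\}$, it lies in a single component and moreover $\bar B_r(0) \subset \mathrm{int}(\Omega)$.

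Next I would verify the hypotheses of Theorem \ref{th:pos:gad-region-of-attraction}. Connected components of a closed subset of $\R^N$ are closed, so $\Omega$ is closed. The coercivity assumption $\|\nabla E(x)\| \to \infty$ as $|x| \to \infty$ implies that the whole sublevel set $\{\|\nabla E\| \leq L\}$ is bounded, hence so is $\Omega$, and therefore $\Omega$ is compact. The strict index-1 condition $\lambda_1(x) < 0 < \lambda_2(x)$ holds on all of $\R^N$ by assumption, in particular on $\Omega$. Thus $\Omega$ satisfies all the prerequisites of Theorem \ref{th:pos:gad-region-of-attraction}.

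Finally, Theorem \ref{th:pos:gad-region-of-attraction} applied to this $\Omega$ yields $\eps_\Omega, \delta_\Omega > 0$ with the property that any initial condition $(x(0), v(0))$ with $x(0) \in \mathrm{int}(\Omega)$ and $\|v(0) - v_1(x(0))\| < \delta_\Omega$ produces a global GAD trajectory converging exponentially to an index-1 saddle $x_* \in \Omega$. Setting $\eps_r := \eps_\Omega$ and $\delta_r := \delta_\Omega$, and noting that every $x(0)$ with $\|x(0)\| \leq r$ satisfies $x(0) \in \bar B_r(0) \subset \mathrm{int}(\Omega)$, gives the claim. There is no real obstacle here beyond the bookkeeping: the only subtle point is ensuring a uniform choice of $\Omega$ (and hence of $\eps_r, \delta_r$) across all starting points with $\|x(0)\| \leq r$, which is handled by the connectedness of $\bar B_r(0)$ and the strict inequality $\|\nabla E\| < L$ on that ball.
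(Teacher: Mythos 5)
Your proof is correct and takes the same route the paper intends: the corollary is stated as ``an immediate corollary of Theorem \ref{th:pos:gad-region-of-attraction}'', and your construction of $\Omega$ as the connected component of $\{\|\nabla E\|\le L\}$ containing $\bar B_r(0)$, with $L$ chosen so that $\bar B_r(0)$ sits in the strict sublevel set, is exactly the required reduction. The hypotheses of the theorem (closedness, boundedness via coercivity of $\|\nabla E\|$, global index-1 condition, and $\bar B_r(0)\subset\mathrm{int}(\Omega)$) are all verified cleanly, so nothing is missing.
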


We mention this result as it establishes a simplified yet still non-trivial
situation, somewhat analogous to convex objectives in optimization,
in which there is a realistic chance to develop a rigorous global
convergence theory for practical saddle search methods that involve
adaptive step size selection and choice of rotation accuracy. Work in this
direction would generate ideas that strengthen the robustness and efficiency of
existing saddle search methods more generally.

\section{Singularities and (quasi-)periodic orbits}
\label{sec:singularities}
We now classify the singularities $\Sing$ for the ISD \eqref{eq:ISD}
in 2D, exhibit finite-time blow-up of the ISD and (quasi-)periodic
solutions of the GAD.

\subsection{Isolated singularities and the discriminant}
\label{sec:isd:general}
Recall that the set of singularities for the ISD is denoted by
$\Sing = \{ x \in \R^2 \,|\, \lambda_1(x) = \lambda_2(x) \}$. The ISD
is defined on $\R^{2} \setminus \Sing$.

Since symmetric matrices with repeated eigenvalues are a subset
of codimension 2 of the set of symmetric matrices, one can expect that
$\Sing$ contains isolated points. This phenomenon is sometimes known as the
{\em Von Neumann-Wigner no-crossing rule} \cite{von1929uber}.

This is particularly easy to see in dimension 2,
because the only $2 \times 2$ matrices with repeated eigenvalues are
multiples of the identity, and therefore are a 1-dimensional subspace
of the 3-dimensional space of $2 \times 2$ symmetric
matrices. To transfer this to the set $\Sing$, we first note that a point $x \in \R^2$ is
a singularity if and only if
\begin{align*}
   \lela e_{1}, \nabla^{2}
   E(x) e_{1}\rira &= \lela e_{2}, \nabla^{2}
   E(x) e_{2}\rira,\\
   \lela e_{2}, \nabla^{2}
   E(x) e_{1}\rira &= 0.
\end{align*}
Writing this system of equations in the form $F(x) = 0$, if the Jacobian
$\nabla F(0)$ is invertible, then the singularity is isolated.

For $i,j,k = 1, 2$ we define
\begin{align*}
  E_{ijk} &= \nabla^{3}E(0)[e_{i},e_{j},e_{k}], \qquad
  \Delta = (E_{111} E_{122}+ E_{112} E_{222}) - E_{112}^{2} + E_{122}^{2},
\end{align*}
then we can compute
\begin{align*}
  {\nabla F}(0) &=
  \begin{pmatrix}
    E_{111} - E_{122} & E_{112} -
    E_{222}\\
    E_{112} & E_{122}\\
\end{pmatrix} \qquad \text{and} \qquad
  \det(\nabla F(0)) = \Delta.
\end{align*}
If $\Delta \neq 0$ (which we expect generically) then
the singularity is isolated. By the implicit function theorem, this
also implies that such a singularity is stable with respect to small
perturbations of the energy functional (see Lemma \ref{th:transform-lemma}
for more details).

Note that this it not the case for the example $E(x,y) = (1-x^{2})^{2} + \alpha
y^{2}$ of Section \ref{sec:intro:non-convergence-results}, which has a line of
singularities on which $\Delta = 0$. This is due to the special form of the
function, where the hessian is constant along vertical lines. This behavior is
not generic, and under most perturbations the singularity set $\Sing$ will
change to a discrete set (this statement can be proven using the transversality
theorem).

\subsection{Formal expansion of the ISD and GAD near a singularity}
We consider the ISD and GAD dynamics in the neighborhood of a singularity
situated at the origin. In the following, we assume $\Delta \neq 0$, so that the
singularity is isolated.

Let $\lambda := \lambda_1(0) = \lambda_2(0)$, then expanding $E$ about $0$ yields
\begin{align*}
  \nabla E(x) &= \nabla E(0) + \lambda x + O(\norm x^2), \\
  \nabla^2 E(x) &= \lambda I + \nabla^3 E(0)[x] + O(\norm x^2).
\end{align*}
Inserting these expansions into the GAD \eqref{eq:GAD} yields
\begin{align*}
   \dot{x} &= -(I-2v\otimes v) \nabla E(0) + O(\norm x), \\
  \eps^{2} \dot v &= - (I - v \otimes v) \nabla^3 E(0)[x,v] + O(\norm{x}^2),
\end{align*}
and dropping the higher-order terms we obtain the leading-order GAD
\begin{equation}
   \label{eq:gad-leading-order}
   \begin{split}
      \dot{x} &= -(I-2v\otimes v) \nabla E(0),\\
     \eps^{2} \dot v &= - (I - v \otimes v) \nabla^3 E(0)[x,v].
   \end{split}
\end{equation}

Since $\Delta \neq 0$, $v_{1}(x)$ is well-defined in
$B_{r}(0) \setminus \{0\}$ for some $r > 0$. To leading order, $v_{1}(x)$ is given
by
\begin{displaymath}
  v_1(x) = w_1(x) + O(\norm x),
\end{displaymath}
where $w_1$ is the eigenvector corresponding to the first eigenvalue of
$\nabla^3 E(0)[x]$. Inserting the expansions for $\nabla^2 E$ and $v_1$ into the
ISD yields
\begin{displaymath}
  \dot x = - (I - 2 w_1(x) \otimes w_1(x)) \nabla E(0) + O(\norm x),
\end{displaymath}
and dropping again the $O(\norm x)$ term we arrive at the
leading-order ISD
\begin{equation}
   \label{eq:isd-leading-order}
  \dot x = - (I - 2 w_1(x) \otimes w_1(x)) \nabla E(0).
\end{equation}

Next, we rewrite the leading-order GAD and ISD in a more convenient format.
If $v = (\cos \phi, \sin \phi) \in S_{1},$ then we define
\begin{equation}
   \accv := \big( \cos(2\phi), \sin(2\phi) \big).
\end{equation}
Furthermore, we define the matrix
\begin{equation} \label{eq:defn-A}
  A :=\begin{pmatrix}
     \frac {E_{111} - E_{122}} 2 & \frac {E_{112} - E_{222}} 2\\
     E_{112} & E_{122}
\end{pmatrix},
\end{equation}
which coincides with $\nabla F(0)$, up to the scaling of the first row.
In particular, $\det A = \frac 1 2 \Delta$.

\begin{lemma}
   \label{lemma_Ax}
   Suppose that $\nabla E(0) = (\cos\alpha, \sin\alpha)$, then the
   leading-order GAD \eqref{eq:gad-leading-order} and ISD
   \eqref{eq:isd-leading-order} are, respectively, given by
   \begin{equation}
      \label{eq:gad-leading-order-rewrite}
      \begin{split}
         \dot{x} &= R_{-\alpha} \accv, \\
         \eps^{2} \dot{\accv} &= -2 \big\< \J \accv, A x \big\> \J\accv
      \end{split}
   \end{equation}
   and
   \begin{equation}
      \label{eq:isd-leading-order-rewrite}
      \dot{x} = -\frac{R_{-\alpha} A x}{\norm{Ax}}.
   \end{equation}
\end{lemma}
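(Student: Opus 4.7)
The plan is to verify each equation by direct computation, parametrising the orientation by its angle, $v = (\cos\phi, \sin\phi)$, and recasting everything in terms of the doubled-angle vector $\accv = (\cos 2\phi, \sin 2\phi)$. The first step is common to both identities: for $v = (\cos\phi,\sin\phi)$, direct multiplication together with the identities $\cos 2\phi = 1 - 2\sin^2\phi$ and $\sin 2\phi = 2\sin\phi\cos\phi$ gives
\[
   I - 2 v \otimes v = \begin{pmatrix} -\cos 2\phi & -\sin 2\phi \\ -\sin 2\phi & \cos 2\phi \end{pmatrix},
\]
and applying $-(I - 2 v \otimes v)$ to $\nabla E(0) = (\cos\alpha, \sin\alpha)$ collapses via angle-sum formulas to $(\cos(2\phi - \alpha), \sin(2\phi - \alpha))^T = R_{-\alpha}\accv$. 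This immediately proves the translation equation in \eqref{eq:gad-leading-order-rewrite}, and reduces the ISD formula \eqref{eq:isd-leading-order-rewrite} to verifying the identity $\bar w_1(x) = - Ax/\norm{Ax}$.

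For the ISD identity, I would use that $w_1(x)$ is an eigenvector of the trace-free part of $\nabla^{3}E(0)[x]$, since eigenvectors are unchanged under addition of a multiple of the identity. A direct computation from the definition shows that this trace-free part has the form $\left(\begin{smallmatrix} p & q \\ q & -p \end{smallmatrix}\right)$ with $(p,q) = Ax$ and $A$ exactly the matrix of \eqref{eq:defn-A}. Such a matrix has eigenvalues $\pm\sqrt{p^{2}+q^{2}}$, and solving $\tan 2\phi = q/p$ shows that the eigenvector of the positive eigenvalue has doubled-angle vector $+(p,q)/\sqrt{p^{2}+q^{2}}$; the lowest eigenvector therefore satisfies $\bar w_1(x) = -Ax/\norm{Ax}$. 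Combined with Step~1 this gives \eqref{eq:isd-leading-order-rewrite}.

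For the GAD rotation equation, differentiating $v = (\cos\phi,\sin\phi)$ along $S_{1}$ gives $\dot v = \dot\phi \J v$ and $\dot{\accv} = 2\dot\phi \J\accv$, and the projector $(I - v\otimes v)$ in \eqref{eq:gad-leading-order} extracts only the $v^{\perp} = \J v$ component, so the rotation equation reduces to the scalar identity
\[
   \eps^{2} \dot\phi = - \bigl\< \J v, \nabla^3 E(0)[x,v]\bigr\>.
\]
The right-hand side is linear in $x$, so it remains to identify it with $-\<A^{T}\J\accv, x\> = -\<\J\accv, Ax\>$; matching the coefficients of $x_{1}$ and $x_{2}$ and using the symmetry $E_{ijk} = E_{(ijk)}$ together with the double-angle identities, each coefficient collapses onto the corresponding entry of $A^{T}\J\accv$. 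Multiplying through by $2\J\accv$ then recovers the stated $\eps^{2} \dot{\accv} = -2\<\J\accv, Ax\>\J\accv$. The one delicate point is the bookkeeping here: the factor $\tfrac12$ in the first row of $A$ --- inherited from defining $p$ as half the trace-free diagonal entry in Step~2 --- must be carried through the rearrangement of $\sum_{i,k} E_{ijk}\, v^{\perp}_{i}\, v_{k}$ into $(A^{T}\J\accv)_{j}$. Beyond this finite index-matching, the lemma is an algebraic identity.
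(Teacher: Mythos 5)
Your proposal is correct and follows essentially the same route as the paper's proof. All three key ingredients match: the identification of $-(I - 2v\otimes v)$ with a reflection through the half-angle direction (the paper packages this as $-(I-2v\otimes v)=\Refl_{\accv}$ with $\Refl_y=\left(\begin{smallmatrix} a & b \\ b & -a\end{smallmatrix}\right)$, you compute the matrix entries directly), the observation that the trace-free part of $\nabla^3 E(0)[x]$ is exactly $\Refl_{Ax}$ so that the lowest eigenvector has doubled-angle vector $-Ax/\norm{Ax}$, and the reduction of the rotation equation via $\dot v = \dot\phi\,\J v$, $\dot{\accv}=2\dot\phi\,\J\accv$ together with $\langle\J v, c\,Iv\rangle = 0$. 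One small remark: in your final step you plan to verify $\langle \J v,\nabla^3 E(0)[x,v]\rangle = \langle\J\accv, Ax\rangle$ by brute-force coefficient matching in $x_1, x_2$; you already established in the ISD step that the trace-free part of $\nabla^3E(0)[x]$ is $\left(\begin{smallmatrix} p & q \\ q & -p\end{smallmatrix}\right)$ with $(p,q) = Ax$, and reusing that decomposition (as the paper does) reduces the computation to the single identity $\langle\J v, \left(\begin{smallmatrix} p & q \\ q & -p\end{smallmatrix}\right)v\rangle = q\cos 2\phi - p\sin 2\phi = \langle\J\accv,(p,q)\rangle$, making the bookkeeping and the factor-$\tfrac12$ issue you flag disappear entirely.
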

\begin{proof}
  For $y = (a,b) \in \R^{2}$, we define the matrix
  \begin{align*}
    \Refl_{y} &=\begin{pmatrix}
                  a&b\\b&-a
                \end{pmatrix}.
  \end{align*}
  Geometrically, if $\|y\|=1$, then $\Refl_{y}$ describes a reflection with
  respect to the line whose directing angle is half that of
  $y$. Accordingly, for $v \in S_{1}$,
   \begin{align}
      \label{eq:rewrite-flip}
      I - 2 v \otimes v &= -\Refl_{\accv},
   \end{align}
   and hence the evolution of $x$ in the leading-order GAD equation \eqref{eq:gad-leading-order} reduces to
   \begin{align*}
      \dot{x} &= \Refl_{\accv} R_\alpha e_1
      = R_{-\alpha} \Refl_{\accv} e_1
      = R_{-\alpha} \accv,
   \end{align*}
   which establishes the first equation in \eqref{eq:gad-leading-order-rewrite}.

   To derive the second equation in
   \eqref{eq:gad-leading-order-rewrite}, subtracting the average of
   the diagonal entries of $\nabla^{3}E(0)[x]$ yields
   \begin{equation*}
     \nabla^{3}E(0)[x]
     =  \begin{pmatrix}
         E_{111} x_1 + E_{112} x_2 & E_{112} x_1 + E_{122} x_2 \\
         E_{112} x_1 + E_{122} x_2& E_{122} x_1 + E_{222} x_2 \end{pmatrix}
     = c(x) I + \Refl_{Ax},
   \end{equation*}
   for some $c(x) \in \R$.
   Together with $I - v \otimes v = \J v \otimes \J v$
   and $\< \J v, c I v \> = c\< \J v, v \> = 0$ this observation implies
   \begin{align*}
      \eps^2 \dot{v}
      &= - \big\< \J v, \big[c I + \Refl_{Ax} ] v \big\> \J v
      = - \lela \J v, \Refl_{Ax}v\rira \J v.
   \end{align*}
   Writing $Ax = (a, b)$
   and $v = (\cos \phi, \sin\phi)$, we obtain
   \begin{align*}
     \lela v, \Refl_{Ax} v\rira
      &= -b \big(\cos^2\phi-\sin^2\phi\big) + 2a \cos\phi \sin\phi  \\
      &= -b \cos(2\phi) + a \sin(2\phi)
      = \big\<  \J\accv, A x \big\>
   \end{align*}
   and thus arrive at
   \begin{displaymath}
      \eps^2 \dot{v} = -\big\<  \J\accv, A x \big\> \J v.
   \end{displaymath}
   Using the observations
   \begin{displaymath}
      \dot{v} = \dot \phi\J v \qquad \text{and} \qquad
       \dot{\accv} =  \dot\phi 2\J \accv
   \end{displaymath}
   immediately yields the second equation in
   \eqref{eq:gad-leading-order-rewrite}.

   Finally, to obtain \eqref{eq:isd-leading-order-rewrite} we first
   observe that the stationary points for the $\accv$ equation of unit
   norm are $\accw^{\pm} = \pm {A x}/{\|Ax\|}$, the stable one being
   $\accw^{-} = -Ax / \|Ax\|$ (corresponding to $v$ being the lowest
   eigenvector of $\Refl_{Ax}$ and therefore $\nabla^{3}E(0)[x]$).
   Applying \eqref{eq:rewrite-flip} we obtain
   \begin{displaymath}
      \dot{x} = \Refl_{\accw^{-}} R_\alpha e_1
      = R_{-\alpha} \accw^{-} = -\frac{ R_{-\alpha}  Ax}{\|Ax\|}. \qedhere
   \end{displaymath}
\end{proof}

\subsection{Finite-time blow-up of the ISD near singularities}

We assume, without loss of generality, that $\|\nabla E(0)\| = 1$.
Then it follows from Lemma \ref{lemma_Ax} that the ISD is given by
\begin{align*}
  \dot x = -\frac{R_{-\alpha} A x}{\norm {A x}} + O(\norm x),
\end{align*}
where $A$ is given by \eqref{eq:defn-A}.
Thus, starting sufficiently close to the origin, we can study the ISD
using the tools of linear stability. Observe first that
\begin{equation*}
\det (R_{-\alpha} A) = \det A
= \frac \Delta 2.
\end{equation*}

If $\Delta > 0$ then $R_{-\alpha} A$ either has two real eigenvalues with the
same sign or it has a pair of complex conjugate eigenvalues. This
results in a singularity that is either attractive, repulsive or a
center (see Figure \ref{fig:singularity_attractive},
\ref{fig:singularity_repulsive} and \ref{fig:singularity_center}
respectively). If $\Delta < 0$, $R_{-\alpha} A$ has two real eigenvalues of
opposite sign and hence the origin will exhibit saddle-like behavior;
cf. Figure \ref{fig:singularity_Delta_positive}.

\begin{figure}[!ht]
   \centering
   \begin{subfigure}[t]{0.45\textwidth}
      \includegraphics[width=0.99\textwidth]{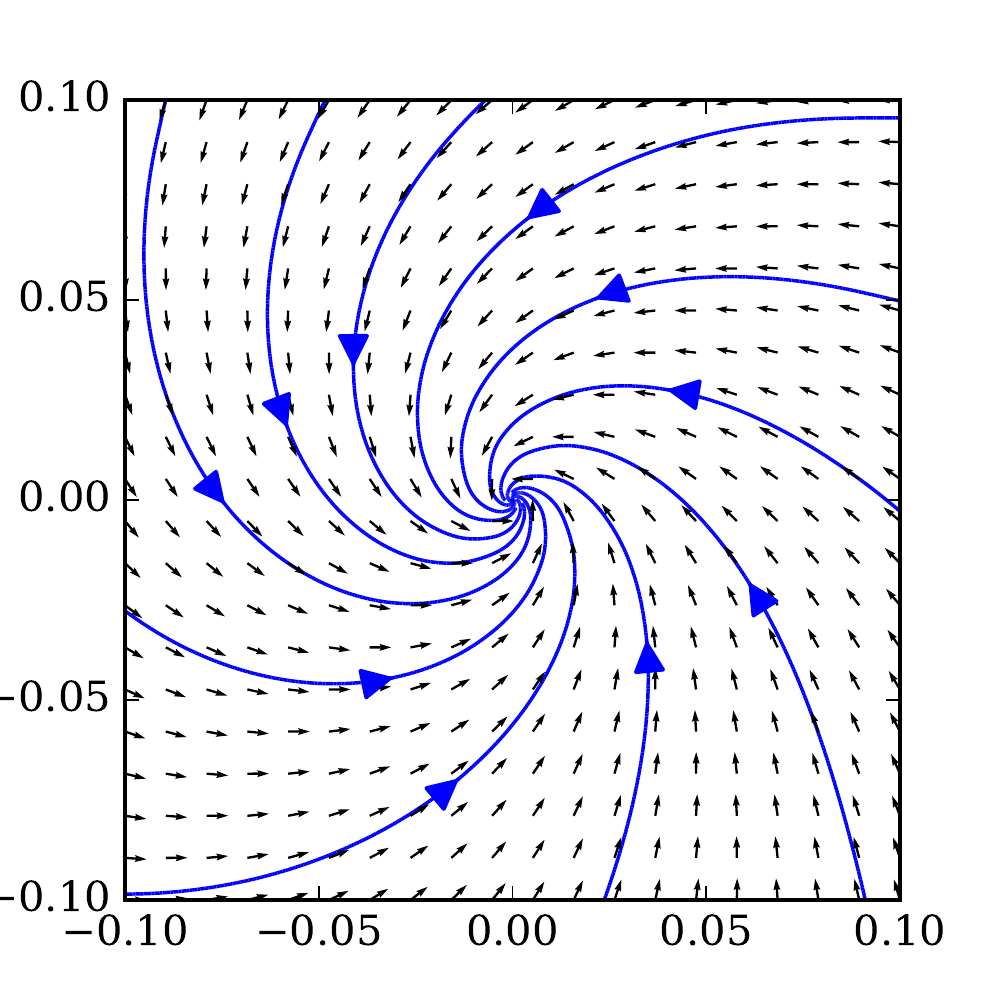}
      \caption{Stable spiral: $s = 1, \alpha = \pi/4, \Delta = -2$}
      \label{fig:singularity_attractive}
   \end{subfigure}
   \begin{subfigure}[t]{0.45\textwidth}
      \includegraphics[width=0.99\textwidth]{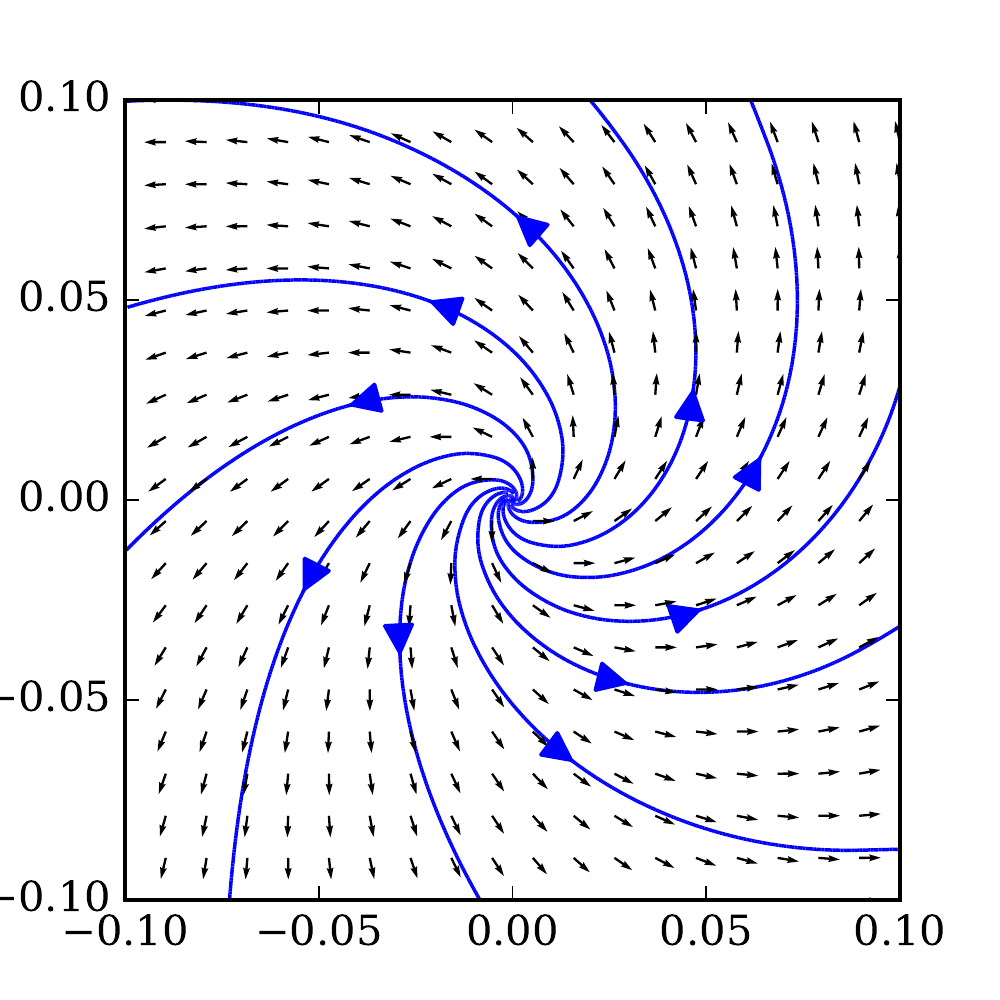}
      \caption{Unstable spiral: $s = 1, \alpha = 3\pi/4, \Delta = -2$}
      \label{fig:singularity_repulsive}
   \end{subfigure}
   \begin{subfigure}[t]{0.45  \textwidth}
      \includegraphics[width=0.99\textwidth]{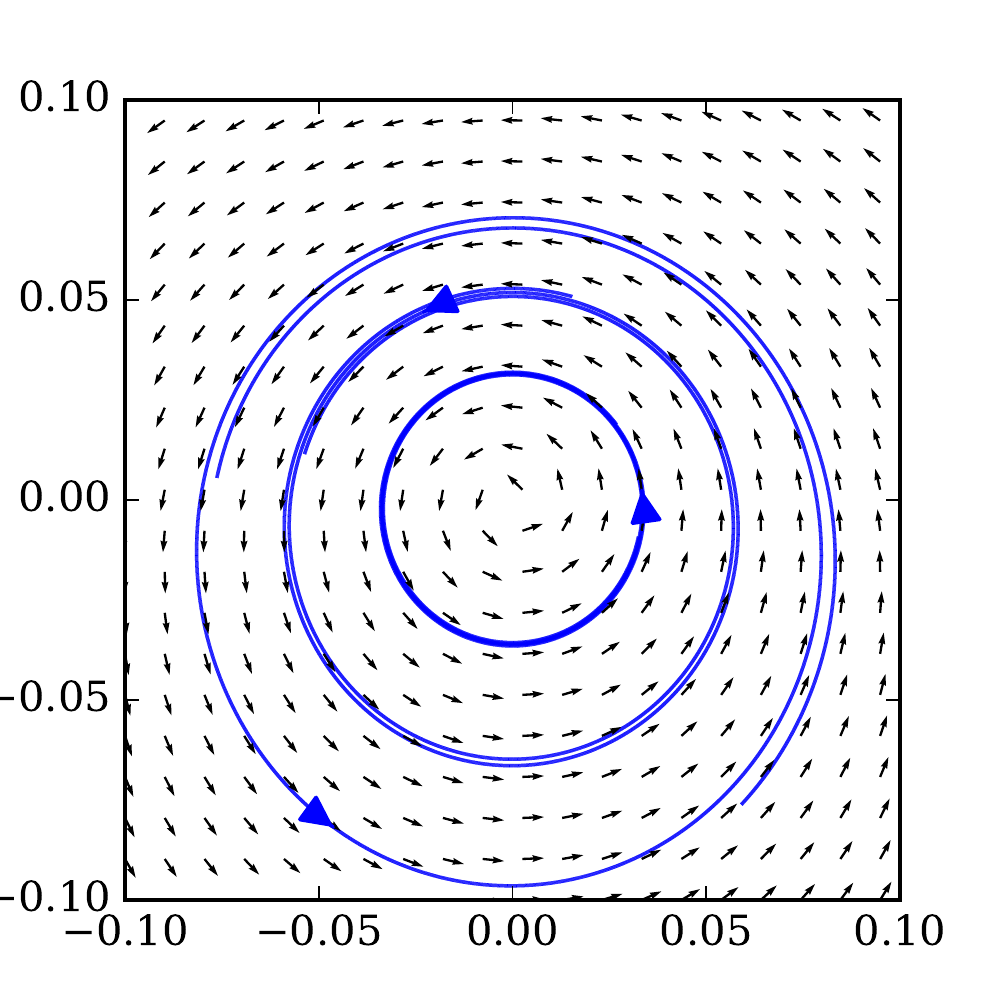}
      \caption{Center: $s = 1, \alpha = \pi/2, \Delta = -2$. }
      \label{fig:singularity_center}
   \end{subfigure}
   \begin{subfigure}[t]{0.45\textwidth}
      \includegraphics[width=0.99\textwidth]{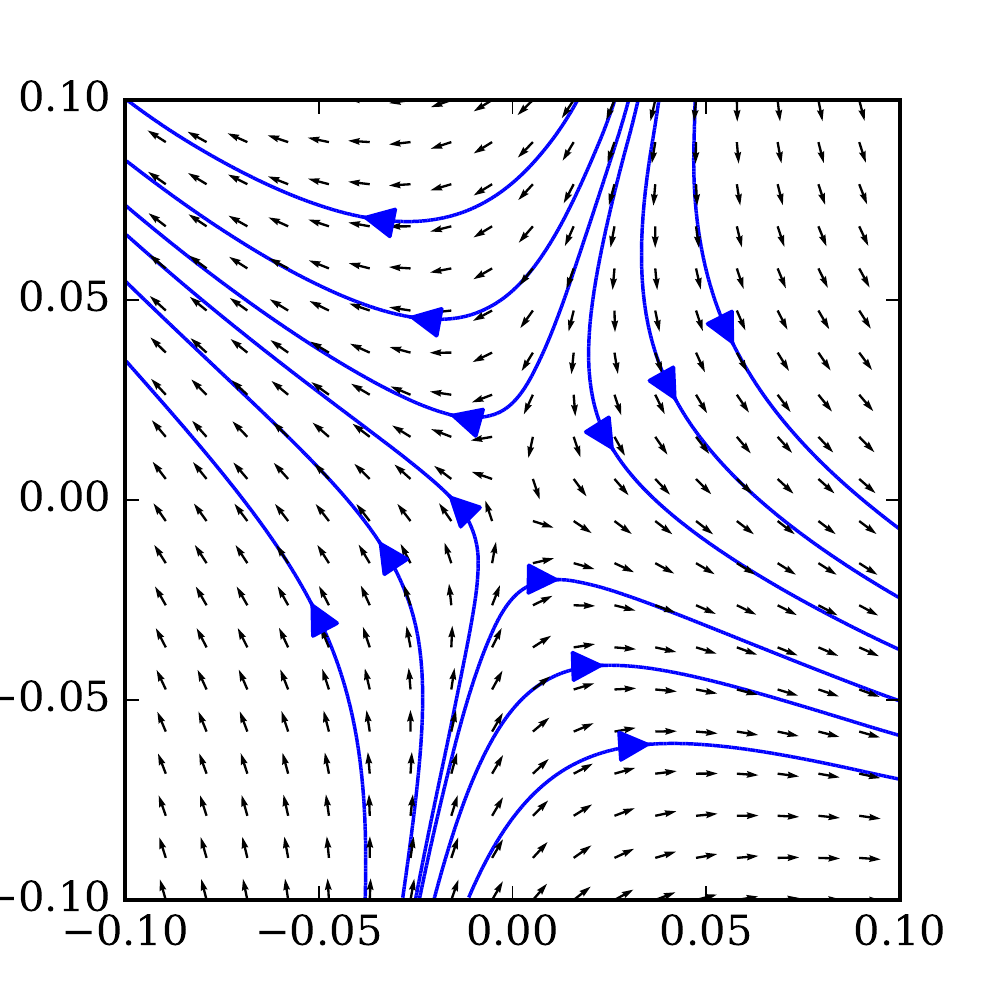}
      \caption{Saddle: $s = -1, \alpha = \pi, \Delta = 4$}
      \label{fig:singularity_Delta_positive}
   \end{subfigure}

     \caption{ISD phase planes near a singularity at $0$. The energy
       functional is $E(x) = \cos \alpha x_1 + \sin \alpha x_2 + \frac12 (x_1^{2} +
       x_2^{2}) + \frac 1 2 ( sx_1^{3} + x_1 x_2^{2})$. }
       %
     \label{fig:ISD-phase-plane}
\end{figure}

We are specifically interested in attractive singularities such as the
one in Figure \ref{fig:singularity_attractive}. In this context, we
prove the following proposition:
\begin{prop}
  \label{th:isd-blowup}
   Suppose that $0 \in \Sing$ is a singularity such that
   $\Delta > 0$ and that $R_{-\alpha} A$ has two eigenvalues
   (counting multiplicity) with positive real part. Then, for
   $\norm{x(0)} \neq 0$
   sufficiently small the corresponding maximal solution
   $x \in C^1([0, T_*))$ of \eqref{eq:ISD} has blow-up time $T_* < \infty$
   and $x(t) \to 0$ as $t \to T_*$.
\end{prop}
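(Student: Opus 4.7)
The plan is to remove the singular factor $1/\|Ax\|$ by a time reparametrization, converting the ISD into a standard smooth ODE with a hyperbolic sink at the origin, and then to show that the inverse time change compresses infinite rescaled time into a finite interval of original time. By Lemma \ref{lemma_Ax}, using also $\det A = \Delta/2 \neq 0$ so that $c_1 \|x\| \leq \|Ax\| \leq c_2 \|x\|$, we may write the ISD on a punctured neighborhood of $0$ as
\[
   \dot x = -\frac{R_{-\alpha} A x}{\|Ax\|} + O(\|x\|).
\]
Away from $0$ this vector field is $C^1$, so Cauchy–Lipschitz yields a unique maximal solution on some $[0, T_*)$ for any nonzero initial datum; the content of the proposition is that $T_* < \infty$ and $x(t) \to 0$.

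Next, I would introduce the reparametrization $s(t)$ defined by $ds/dt = 1/\|Ax(t)\|$, which is well-defined as long as $x \neq 0$. Setting $\xi(s) := x(t(s))$, the chain rule gives
\[
   \xi'(s) = \|A\xi(s)\| \, \dot x = -R_{-\alpha} A\,\xi(s) + O(\|\xi(s)\|^2),
\]
because the $O(\|x\|)$ correction picks up a factor $\|Ax\| = O(\|\xi\|)$. This rescaled system extends as a $C^1$ vector field through $0$ with linearization $-R_{-\alpha} A$. By hypothesis, $R_{-\alpha} A$ has both eigenvalues with strictly positive real part, so $-R_{-\alpha} A$ is Hurwitz; the standard Lyapunov-function argument (taking $V(\xi) = \<\xi, P\xi\>$ with $P$ solving $P(-R_{-\alpha}A) + (-R_{-\alpha}A)^\top P = -I$) gives, for $\|\xi(0)\|$ small enough, global existence on $[0,\infty)$ together with exponential decay $\|\xi(s)\| \leq K e^{-cs}$ for some $c, K > 0$.

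Finally, I would pull back to original time through $dt/ds = \|A\xi(s)\|$: because of the exponential decay,
\[
   T_* \;=\; \int_0^\infty \|A\xi(s)\|\,ds \;<\; \infty,
\]
so $t \to T_* < \infty$ as $s \to \infty$, and correspondingly $x(t) = \xi(s(t)) \to 0$. Monotonicity of $s(t)$ together with $s(t) \to \infty$ as $t \to T_*$ shows that $[0, T_*) \leftrightarrow [0, \infty)$ is a genuine bijection, which identifies the blow-up time of the ISD with the escape of $s$ to infinity. The main technical obstacle is the careful bookkeeping of the $O(\|x\|)$ remainder from Lemma \ref{lemma_Ax} through the reparametrization—one must verify that it genuinely contributes $O(\|\xi\|^2)$ in the new variable and thus does not destroy hyperbolicity at $0$—together with the requirement that $\xi(s)$ remain in the ball on which the leading-order expansion is valid, which is ensured by choosing $\|x(0)\|$ sufficiently small and invoking local stable-manifold theory rather than the leading-order flow alone.
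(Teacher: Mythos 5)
Your proof is correct, and it takes a genuinely different route from the paper's.

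The paper works directly in the physical time $t$: starting from $\dot x = \frac{1}{\|Bx\|}\bigl(-Bx + g(x)\bigr)$ with $B = R_{-\alpha}A$ and $\|g(x)\| \leq C\|x\|^2$, it diagonalizes $B = PDP^{-1}$, sets $y = P^{-1}x$, and derives the single differential inequality $\frac12 \frac{d}{dt}\|y\|^2 \leq -C_1\|y\| + C_2\|y\|^2$. Near zero the right-hand side is $\leq -\tfrac{C_1}{2}\|y\|$, so $\|y\|$ decreases at a rate bounded below and hits zero in finite time; blow-up is immediate. Your argument instead desingularizes the vector field by the time change $ds/dt = 1/\|Ax\|$, converting the ISD into the regular ODE $\xi' = -R_{-\alpha}A\xi + O(\|\xi\|^2)$ with a Hurwitz linearization at the origin; a quadratic Lyapunov function then gives global existence in $s$ and exponential decay, and $T_* = \int_0^\infty \|A\xi(s)\|\,ds < \infty$ follows from the integrability of an exponentially decaying function. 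The paper's route is shorter and entirely elementary (one coordinate change plus one differential inequality). Your route is more structural: it makes it transparent that the ISD near an attractive singularity is a time reparametrization of a smooth hyperbolic sink, and it produces the finiteness of the blow-up time as the convergence of an explicit integral. Both are sound; the bookkeeping you flag as the main obstacle (that the $O(\|x\|)$ remainder becomes $O(\|\xi\|^2)$ after multiplication by $\|A\xi\|$) is indeed the one point that requires care, and your treatment of it is correct.

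One minor remark: your closing appeal to local stable-manifold theory is superfluous and slightly misplaced. The quadratic Lyapunov function already traps $\xi(s)$ in the small ball where the expansion is valid, without needing any $C^1$ regularity of the remainder through the origin (which, incidentally, is not guaranteed: the remainder is only $C^1$ away from $0$ and $O(\|\xi\|^2)$ in magnitude). The Lyapunov argument is exactly the robust version that works here; the stable-manifold theorem would demand more smoothness than you actually have.
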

\begin{proof}
   We have shown in Lemma \ref{lemma_Ax} that the ISD can be written in the form
   \begin{align*}
      \dot x &= \frac 1 {\|Bx\|} \left(-Bx + g(x)\right),
   \end{align*}
  where $B = R_{-\alpha} A$ has eigenvalues with positive real part,
  $\norm{g(x)} \leq C \|x\|^{2}$ for $x$ in a neighborhood of the
  origin, and $C>0$ a constant. According to standard ODE theory, there is a
  maximal solution $x(t)$ in an interval $[0, T_{*})$. Assuming $T_{*} = \infty$,
   we will obtain a contradiction by showing that $x(t) = 0$ for some finite
   $t$.

  Diagonalizing $B$ (or taking its Jordan normal form), there exists an invertible $P \in \R^{2 \times 2}$ such that $B = P D P^{-1}$,
  with $D$ of one of the following three forms:
  \begin{align*}
    D =
        \begin{pmatrix}
          \lambda_{r}&-\lambda_{i}\\
          \lambda_{i}&\lambda_{r}
        \end{pmatrix},
        \qquad
     D =
        \begin{pmatrix}
          \lambda_{1}&0\\0&\lambda_{2}
        \end{pmatrix}, \qquad \text{or} \qquad
     D =
                            \begin{pmatrix}
                              \lambda&\eps\\
                              0 & \lambda
                            \end{pmatrix},
  \end{align*}
  where $\eps$ may be chosen arbitrarily small.
  In all cases, by the hypothesis that $B$ has
  eigenvalues with positive real parts, $D$ is invertible, and there
  exists $\mu > 0$ such that $\lela x, D x\rira \geq \mu \norm{x}^{2}$ for
  all $x \in \R^{2}$.

  Setting $x = P y$, we obtain
  \begin{align*}
    \dot y &= - \frac {R y + g(P y)}{\norm{P R y}}
  \end{align*}
  and therefore
  \begin{align*}
     \frac 1 2 \frac {d}{dt} \norm{y}^{2} &\leq
         \frac{- \langle Dy, y \rangle + g(Py)}{\|PDy\|}
         \leq
          - C_1 \|y\| + C_2 \|y\|^2,
  \end{align*}
  where $C_1 = \frac{\mu}{\norm{R}\norm{P}},
  C_2 = C \norm{P^{-1}}\norm{R^{-1}} \norm{P}^{2}$, for $y$ in a neighborhood
  of zero. It follows that, when
  $\norm{x(0)}$ is sufficiently small, then
  $\norm{y}^{2}$ is decreasing and reaches zero in finite time.
\end{proof}

%

Proposition \ref{th:isd-blowup} demonstrates how the ISD, a seemingly ideal
dynamical system to compute saddle points can be attracted into
point singularities and thus gives a further example of how the global convergence
of the ISD fails. Next, we examine the consequences of this result for
the GAD.

\subsection{The isotropic case}
\label{sec:isotropic_case}
The GAD \eqref{eq:gad-leading-order-rewrite} is
a nonlinear dynamical system of dimension 3 (two dimensions for $x$,
one for $\accv$), who are known to exhibit complex (e.g. chaotic)
behavior. In the setting of Proposition \ref{th:isd-blowup}, we expect that ``most'' solutions of the GAD
converge to a limit cycle. Numerical experiments strongly support this
claim, but indicate that the limit cycles can be complex; see
Figure~\ref{fig:anharmonicity}.

\begin{figure}
   \centering
   \begin{subfigure}[h]{0.45\textwidth}
     \centering
     \includegraphics[height=0.99\textwidth]{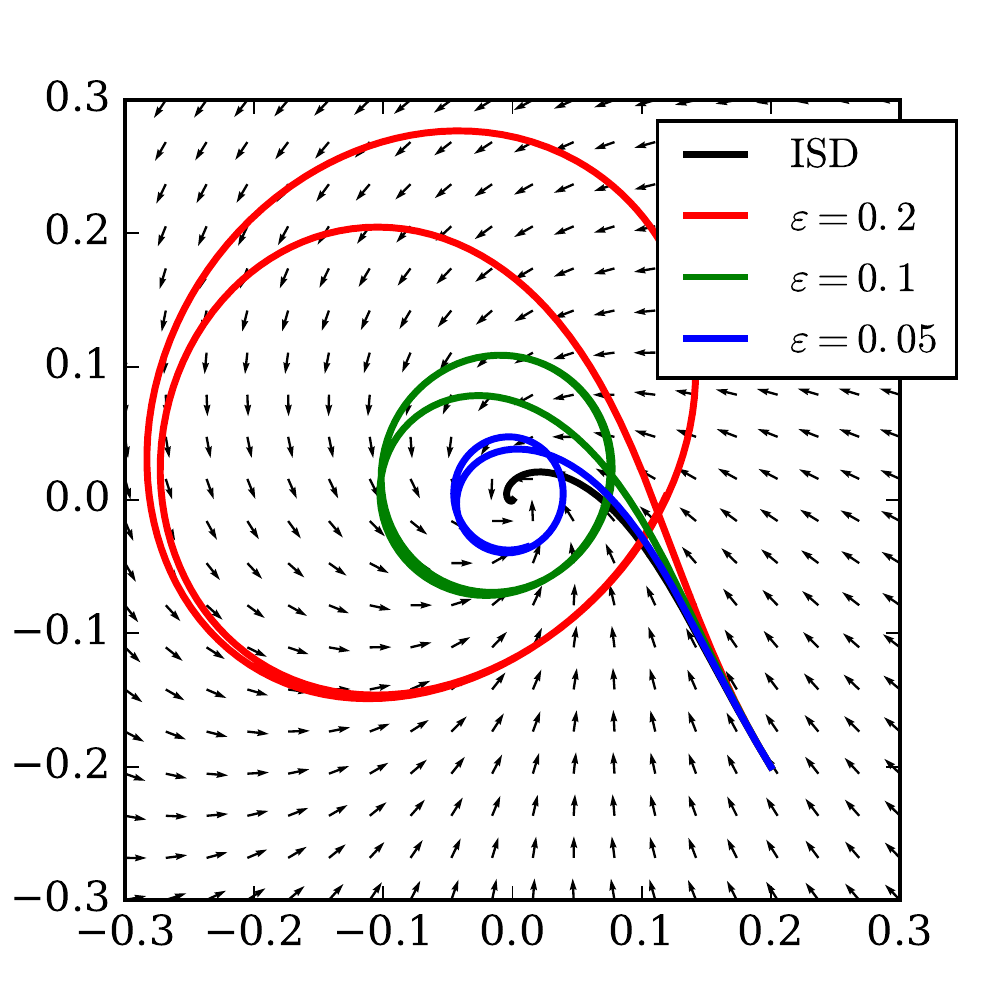}
     \caption{ISD and GAD for $E(x) = \cos \alpha x_1 + \sin \alpha x_2 + \frac12 (x_1^{2} +
       x_2^{2}) + \frac 1 2 (x_1^{3} + x_1 x_2^{2})$,
       $\alpha = \pi/4$; cf. Sections \ref{sec:isotropic_case} and
       \ref{sec:gad:special-case}. \\  {\quad} \\ }
   \end{subfigure}
   \hspace{0.5cm}
   \begin{subfigure}[h]{0.45\textwidth}
      \includegraphics[height=0.99\textwidth]{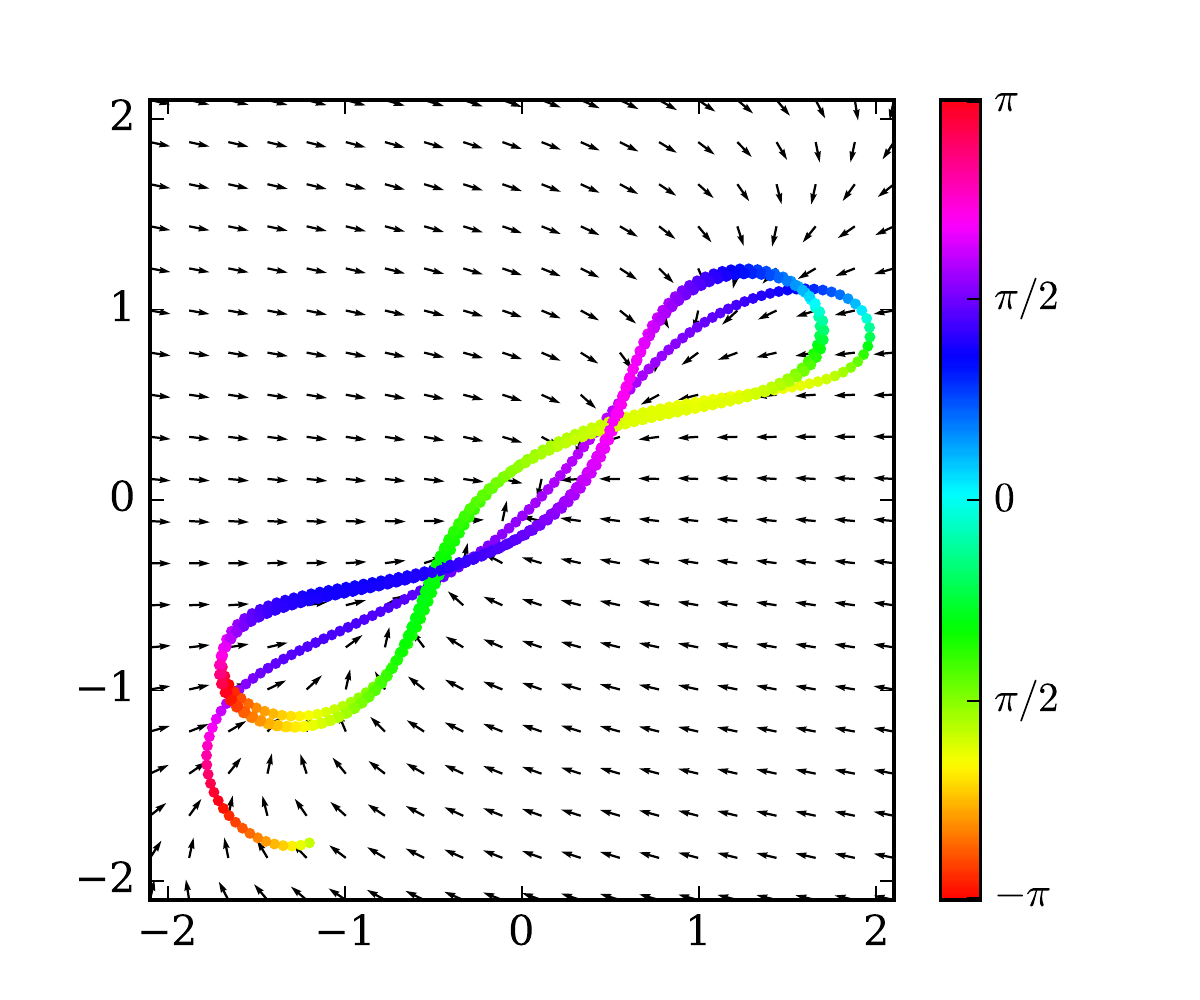}
      \caption{Leading order GAD \eqref{eq:gad-leading-order-rewrite} in a non-isotropic case:
      $\alpha = -1, A =
         \begin{pmatrix} -0.3 & 0.4\\ -0.4 & 0.3 \end{pmatrix}$, which has
         singular values $0.7$ and $0.1$. The
         color corresponds to the phase of $\accv$.}
   \end{subfigure}
      \qquad
   \caption{Leading order ISD and GAD in an isotropic (left) and
     anisotropic (right) case. }
      \label{fig:anharmonicity}
\end{figure}

We now seek to rigorously establish the existence of (quasi-)periodic
behavior of the GAD, at least in special cases. To that end we write
\begin{equation*}
   A = R_{s} D R_t, \qquad \text{where } R_{s}, R_t \in {\rm SO}(2)
   \quad \text{and } D = {\rm diag}(d_1, d_2),
\end{equation*}
and we recall that $d_1 d_2 = \det D = \det A = \Delta/2 > 0$, that is, $d_1, d_2$ have
the same sign.
Since $R_s, R_t, R_{-\alpha}, \J$ commute, under the substitution
$y = R_t x, \accw = R_{-s} \accv$, the leading-order GAD becomes
\begin{align*}
   \dot{y} &= R_{s-t-\alpha} \accw, \\
   \eps^2 \dot{\accw} &= - 2 \langle \J \accw, D y \rangle \J \accw.
\end{align*}
From Figure~\ref{fig:anharmonicity} we observe that a complex limit
cycle can occur in the anisotropic case $d_1 \neq d_2$, while the
behavior when $d_{1} = d_{2}$ is much simpler. In order to get a tractable
system, we restrict ourselves in the following to the {isotropic case}
$d_1 = d_2$, where we can use polar coordinates to perform a stability
analysis. Note that this corresponds to imposing that $A$ is a
multiple of a rotation matrix: $A_{11} =
A_{22}$, $A_{12} = -A_{21}$. This is equivalent to the condition $E_{111} = 3
E_{122}, E_{222} = 3 E_{112}$, i.e. the cubic terms are of
the form $a x_{1}^{3} + b x_{1}^{2}x_{2} + a x_{1}x_{2}^{2} +
bx_{2}^{3}$, for any $a, b \in \R$.

Under this hypothesis, $A = d R_t$ for some scalars
$d > 0, t \in \R$. Under the transformations
$x \leadsto R_t x, \eps^2 \leadsto \eps^2/d, \alpha \leadsto \alpha - t$,
the leading order GAD equations
\eqref{eq:gad-leading-order-rewrite} become
\begin{align}
  \label{eq:gad-leading-order-isotropic}
  \begin{split}
    \dot x &= R_{-\alpha} \accv\\
    \eps^{2} \dot \accv &= - 2 \lela \J \accv, x\rira \J\accv.
  \end{split}
\end{align}

Thus, up to a rescaling of $\eps$, a rotation of $x$ and a shift in $\alpha$
(rotation of $\nabla E(0)$), restricting to isotropic matrices is
equivalent to restricting to $A = I$, which corresponds to
\begin{equation*}
  E_{111} = 3, \quad E_{112} = 0, \quad E_{122} = 1, \quad E_{222} = 0,
\end{equation*}
or
\begin{align}
  \label{eq:isd:dddE-special-case}
  E(x) = \cos \alpha x_1 + \sin \alpha x_2 + \frac \lambda2 (x_1^{2} +
  x_2^{2}) + \frac 1 2 (x_1^{3} + x_1 x_2^{2}) + O(\norm{x}^{4}).
\end{align}
We consider this case in the sequel, as well as the restriction
$\cos\alpha>0$, which ensures that $R_{-\alpha}$ has eigenvalues with
positive real part and therefore that the ISD converges to zero.

\subsection{Explicit solutions of the leading-order GAD in the
  attractive isotropic case}
\label{sec:gad:special-case}
We now produce an explicit solution of the leading-order isotropic GAD
\eqref{eq:gad-leading-order-isotropic}, which makes precise the
intuition that delayed orientation relaxation of the GAD balances the
blow-up of the ISD and thus leads to periodic orbits.

On substituting polar coordinates
\begin{equation*}
   x = r (\cos\theta, \sin\theta), \qquad v = (\cos\phi, \sin\phi)
   \qquad \text{and hence} \qquad \accv = (\cos 2\phi, \sin 2\phi)
\end{equation*}
in \eqref{eq:gad-leading-order-isotropic}
we obtain a set of three coupled ODEs for $r,\theta$ and
$\phi$:
\begin{align} \label{eq:gad-lo-isotropic-polar}
  \begin{split}
    \dot r &= \cos(2 \phi - \alpha - \theta) \\
    r \dot \theta &= \sin(2 \phi - \alpha - \theta)\\
    \eps^{2} \dot \phi &= r\sin(2 \phi - \theta).
  \end{split}
\end{align}

We now analyze the behavior of this set of equations for $\eps \ll 1$. This
corresponds to an adiabatic limit where the evolution of $v$ is fast enough to relax
instantly to its first eigenvector, so that the dynamics mimics closely the
ISD. However, this is counterbalanced by the fact that the dynamics
for $v$ becomes slow as $r \to 0$.

The $r$ dynamics takes place at a timescale $1$, the $\theta$ dynamics at a
timescale $r$, and the $\phi$ dynamics at a timescale $\frac
{\eps^{2}}r$. The adiabatic approximation of fast relaxation for $v$ (the ISD)
is valid when $\frac {\eps^{2}}r \ll r$, or $r \gg \eps$. In this
scaling we recover the ISD \eqref{eq:isd-leading-order-rewrite}.
One the other hand, when $r \ll \eps$, then $\theta$ relaxes to a stable
equilibrium $2\phi - \alpha - \theta = 2k\pi, k \in \mathbb Z$, in
which case we obtain $\dot r = + 1$. Therefore we may expect that
for $r \gg \eps$, $r$ decreases, while for $r \ll \eps$, $r$
increases.

We now examine the intermediate scaling $r \sim \eps$. Rescaling
$r = \eps r'$ we obtain
\begin{align*}
  \eps \dot r' &= \cos(2 \phi - \alpha - \theta) \\
  \eps r' \dot \theta &= \sin(2 \phi - \alpha - \theta)\\
  \eps \dot \phi &= r'\sin(2 \phi - \theta).
\end{align*}
All variables now evolve at the same characteristic
timescale $\eps$, hence we rescale $t = \eps t'$.
For the sake of simplicity of
presentation we drop the primes to obtain the system
\begin{equation}\label{eq:rescale-GAD}
   \begin{split}
    \dot r &= \cos(2 \phi - \alpha - \theta) \\
    r \dot \theta &= \sin(2 \phi - \alpha - \theta)\\
    \dot \phi &= r\sin(2 \phi - \theta),
   \end{split}
\end{equation}
which describes the evolution \eqref{eq:gad-lo-isotropic-polar} on time
and space scales of order $\eps$.

We observe that the evolution of \eqref{eq:rescale-GAD} does not depend
on $\theta$ and $\phi$ individually, but only on $\omega = 2\phi - \theta$. Keeping
only the variables of interest, $r$ and $\omega$, we arrive at the 2-dimensional
system
\begin{equation} \label{eq:GAD-reduction to 2D}
   \begin{split}
  \dot r &= \cos (\omega - \alpha)\\
  \dot \omega &=   2 r \sin \omega - \frac 1 r \sin(\omega -
                \alpha).
    \end{split}
\end{equation}
Since $\cos \alpha > 0$, \eqref{eq:GAD-reduction to 2D} has two
fixed points, with associated stability matrix $J^\pm$,
\begin{equation}
 \label{eq:gad:2dsimple:Jpm}
 r_{0} = \sqrt{\frac 1 {2 \cos \alpha}}, \qquad
 \omega_{0}^{\pm} = \alpha \pm \frac \pi 2, \qquad
 J^{\pm} =
 \begin{pmatrix}
   0 & \mp 1\\
   \pm 4 \cos \alpha & \mp \frac{2 \sin \alpha}{\sqrt{2 \cos \alpha}}
 \end{pmatrix}.
\end{equation}
The determinant of $J^\pm$ is positive. The eigenvalues are either complex
conjugate or both real; in both cases their real part is of the same sign as
the trace,
\begin{align*}
 \text{tr} J^{\pm} = \mp \frac{2 \sin \alpha}{\sqrt{2 \cos \alpha}}.
\end{align*}
If $\sin \alpha > 0$, then $(r_{0}, \omega_{0}^{+})$ is stable, whereas if
$\sin\alpha < 0$, then $(r_{0}, \omega_{0}^{-})$ is stable. The case $\sin
\alpha = 0$ cannot be decided from linear stability, and so we exclude it in our
analysis.

In real variables, the resulting behavior is that the system stabilizes in a
periodic orbit at $r_{0} = \eps \sqrt{\frac 1 {2 \cos \alpha}}$. $\theta$
evolves twice at fast as $\phi$, so that $\omega = 2\phi - \theta$ stays
constant at $\omega^{\pm} = \alpha \pm \frac \pi 2$.   Thus we have established
the following result.

\begin{lemma} \label{th:isotropic-stable-orbit}
 If $\cos \alpha > 0, \sin \alpha \neq 0$, then the projection
 \eqref{eq:GAD-reduction to 2D} of the leading order isotropic GAD admits a
 stable circular orbit of radius
 \begin{align*}
   r &= \frac{\eps}{\sqrt{2 \cos \alpha}}.
 \end{align*}
\end{lemma}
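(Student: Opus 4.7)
My plan is to prove Lemma \ref{th:isotropic-stable-orbit} by a direct linear stability analysis of the planar system \eqref{eq:GAD-reduction to 2D}; the algebra has essentially been assembled in the paragraphs preceding the statement, and the task is to verify and package the claims cleanly.

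First I would verify that the only fixed points of \eqref{eq:GAD-reduction to 2D} (with $r>0$, $\omega$ considered modulo $2\pi$) are $(r_0,\omega_0^\pm)$ with $\omega_0^\pm = \alpha\pm\pi/2$ and $r_0 = 1/\sqrt{2\cos\alpha}$. Indeed, $\dot r = 0$ forces $\cos(\omega-\alpha)=0$, hence $\omega\equiv\alpha\pm\pi/2$; substituting into $\dot\omega=0$ and using $\sin(\alpha\pm\pi/2)=\pm\cos\alpha$ together with $\sin(\pm\pi/2)=\pm 1$ yields $2r^2\cos\alpha = 1$, which is solvable with $r_0>0$ precisely because of the hypothesis $\cos\alpha>0$.

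Second I would compute the Jacobian of the right-hand side of \eqref{eq:GAD-reduction to 2D} at these two fixed points and confirm that it equals the matrix $J^\pm$ in \eqref{eq:gad:2dsimple:Jpm}. From that formula one reads off $\det J^\pm = 4\cos\alpha > 0$ and $\mathrm{tr}\, J^\pm = \mp 2\sin\alpha/\sqrt{2\cos\alpha}$. A positive determinant forces the two eigenvalues to be either a complex conjugate pair or two reals of the same sign, so in either case the sign of their real parts equals the sign of the trace. The hypothesis $\sin\alpha\neq 0$ ensures that exactly one of $J^+$, $J^-$ has strictly negative trace, namely the one whose $\pm$ label matches $\mathrm{sign}(\sin\alpha)$. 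That fixed point is therefore a linearly stable hyperbolic equilibrium, and asymptotic stability for the nonlinear system \eqref{eq:GAD-reduction to 2D} at this point then follows from the Hartman--Grobman theorem.

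Finally, I would translate this equilibrium back into the geometric statement of the lemma. Since \eqref{eq:GAD-reduction to 2D} was obtained from the polar leading-order isotropic GAD \eqref{eq:gad-lo-isotropic-polar} by the change of variables $r=\eps r'$, $t=\eps t'$ (with primes subsequently dropped), a fixed point at rescaled radius $r_0=1/\sqrt{2\cos\alpha}$ corresponds, in the original $x$-coordinates, to a trajectory on which $r$ is constant equal to $\eps/\sqrt{2\cos\alpha}$ while $\theta$ and $\phi$ evolve with $2\phi-\theta$ pinned at $\omega_0^\pm$; this is exactly a circle of the claimed radius, and its attractivity is the stability established in the previous step. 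There is no serious obstacle in this proof; the only point requiring comment is the exclusion of $\sin\alpha=0$, where $\mathrm{tr}\, J^\pm = 0$ and linear theory is inconclusive, which is precisely why the hypothesis rules it out.
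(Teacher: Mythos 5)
Your proposal is correct and follows essentially the same route as the paper: identify the fixed points $(r_0,\omega_0^\pm)$ of \eqref{eq:GAD-reduction to 2D}, compute the Jacobians $J^\pm$ of \eqref{eq:gad:2dsimple:Jpm}, use $\det J^\pm>0$ to reduce stability to the sign of the trace, and conclude via the hypotheses $\cos\alpha>0$, $\sin\alpha\neq 0$ before undoing the rescaling $r=\eps r'$, $t=\eps t'$. The only cosmetic difference is your explicit appeal to Hartman--Grobman for the nonlinear conclusion, where the paper simply invokes linearized stability.
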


In the next section, we will show that this behavior survives to a threefold
generalization: the re-introduction of the neglected higher-order terms,
perturbations of the energy functional, as well as dimension $N > 2$.

\subsection{Quasi-periodic solutions of GAD}
\label{sec:rigorous:E0}
The computation of Section \ref{sec:gad:special-case} suggests that
 the GAD for the energy functional
\begin{equation} \label{eq:rigorous:badE}
  E(x_1,x_2) = (\cos \alpha x_1 + \sin \alpha x_2) + \frac \lambda 2(x_1^{2} +
  x_2^{2}) + \frac12(x_{1}^{3} + x_{1} x_{2}^{2})
\end{equation}
has nearly periodic trajectories near the origin when $\alpha \in (-\pi/2, 0)
\cup (0, \pi/2)$. Any third-order term of the form
$a x_{1}^{3} + b x_{1}^{2}x_{2} + a x_{1}x_{2}^{2} + bx_{2}^{3}$
for $a, b \in \R$ reduces to
\eqref{eq:rigorous:badE} upon a suitable change of variables.
We will now rigorously prove
the existence of quasi-periodic behavior in the multidimensional and
perturbed case. We split an $N$-dimensional state space $V = \R^{N}$ into two components
$V = V_{\rm s} \oplus V_{\rm c}$: a two-dimensional subspace $V_{\rm s}$
(\textit{singular}) on which the dynamics is the same as in the
2D case, and an $(N-2)$-dimensional subspace $V_{\rm c}$ (\textit{converging}) on which the GAD dynamics converges to zero. Let
$I = \{1,\dots,N\}$, $I_{\rm s} = \{1,2\}$ and $I_{\rm c} = \{3,\dots,N\}$ be
the corresponding set of indices and, for
$x \in \R^{N}, x_{\rm s} = (x_{1},x_{2}, 0,\dots,0) \in V_{\rm s}$ and $x_{\rm c} =
(0,0,x_{3},\dots,x_{N}) \in V_{\rm s}$.

%

We consider a functional $E = E^0$ of the form,
\begin{equation}  \label{eq:multiD-E0}
   \begin{split}
  E^{0}(x) &= (\cos \alpha^{0} x_{1} + \sin
             \alpha^{0} x_{2})
  + \frac {\lambda^{0}} 2 (x_{1}^{2}
  + x_{2}^{2}) + \frac12(x_{1}^{3} + x_{1} x_{2}^{2}) \\
  & \qquad
  + \frac 1 2 \sum_{i,j \in I_{\rm c}}^{N} H^{0}_{ij} x_{i} x_{j}
  +\frac 1 6 \sum_{i,j,k \in I_{\rm c}} G^{0}_{ijk} x_{i} x_{j} x_{k}
  + O(\norm{x}^{4}),
  \end{split}
\end{equation}
%
where  $\alpha^{0} \in (0, \pi/2) \cup (\pi/2, \pi), \lambda^{0} \in \R$,
$H^{0}_{ij} = \nabla^{2} E^{0}(0)[e_{i},e_{j}], i,j \in I_{\rm c}$, and
$G^{0}_{ijk} = \nabla^{3} E^{0}(0)[e_{i},e_{j},e_{k}]$, $i, j, k \in I$.

For $x_{\rm c} = 0$, $E^{0}$ coincides with \eqref{eq:rigorous:badE}
to within $O(\|x\|^4)$ and the condition on $\alpha^{0}$ are consistent with
Lemma~\ref{th:isotropic-stable-orbit}.
We assume for the remainder that $H^0 > \max(\lambda^0,0)$: the requirement $H^0 > \lambda^0$ ensures that $\lambda^0$ is
indeed the lowest eigenvalue, while $H^0 > 0$ ensures that
$x_{\rm c} \to 0$ as $t \to \infty$.

%
%
%

An example of a functional in this class, and the resulting GAD dynamics
are shown in Figure~\ref{fig:3D}. Our main result is the following theorem
stating that the limit cycles at $r = \eps/\sqrt{2 \cos \alpha^{0}}$ present
in the 2D leading-order GAD survive in the nonlinear, multidimensional,
perturbed regime. The proof is given in Appendix \ref{appendix:quasi-periodic}.

\begin{figure}
 \centering
 \includegraphics[width=0.9\textwidth]{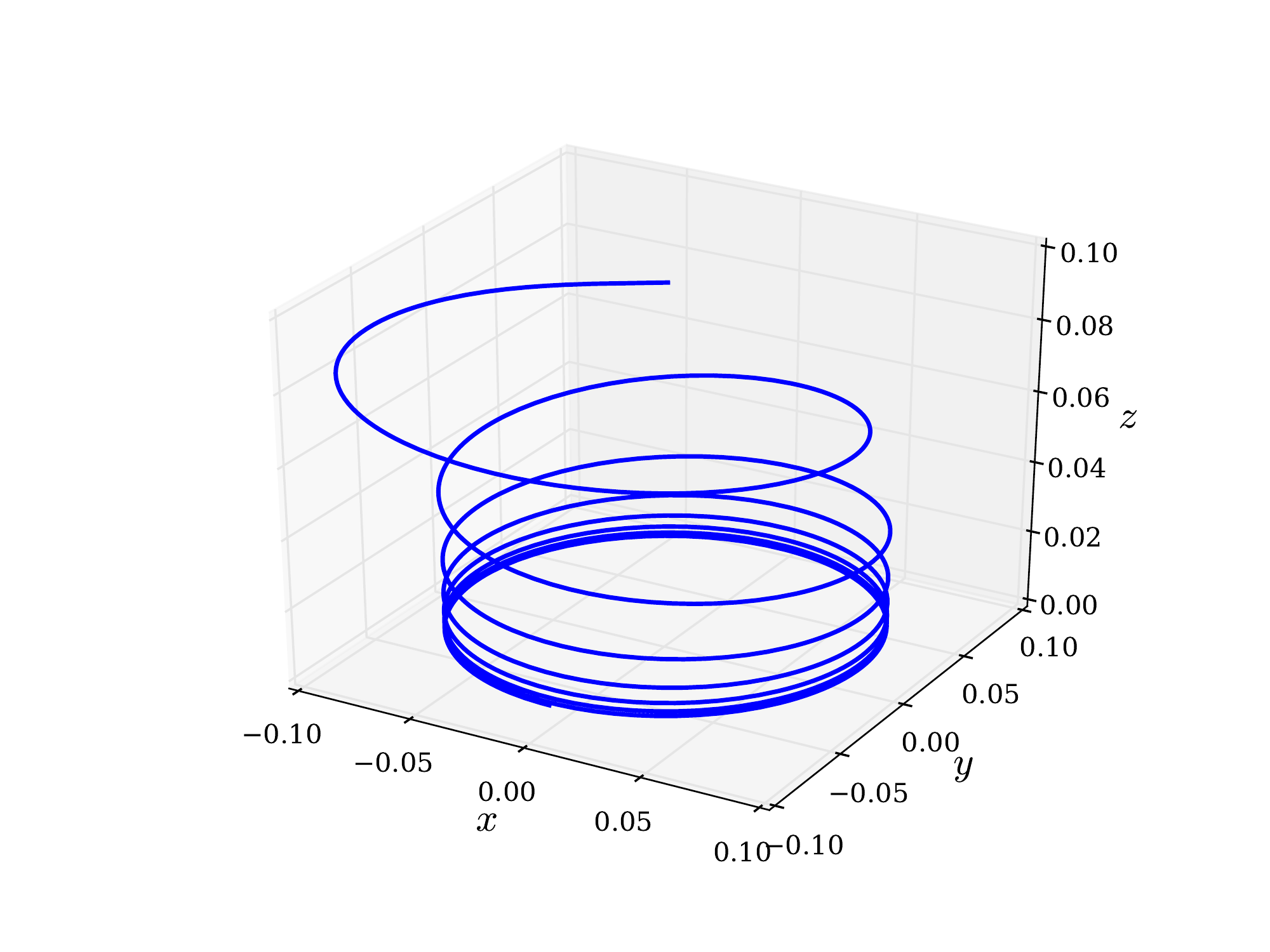}
 \caption{$E(x) = \cos \alpha x + \sin \alpha x + \frac{1} 2 (x^{2} +
   y^{2} + 1.1 z^{2}) + \frac 1 2 (x^{2}y + y^{3}) + z^{3}$, $\alpha = 3 \pi/4$. }
 \label{fig:3D}
\end{figure}

\begin{thm} \label{thm:quasi-periodic-GAD}
   Let $E^0, \Delta E \in C^4(\R^N)$ with $E^0$ satisfying
   \eqref{eq:multiD-E0} with $H > \max(0, \lambda^0) I$. For $\delta > 0$ let $E^\delta := E^0 + \delta \Delta E$.
   Then there exist constants
   $\delta_{0}, \eps_{0}, m, M > 0$ such that, for all
   $\eps < \eps_{0}, \delta < \delta_{0}$, the following statements hold.
   \begin{enumerate}
   \item \label{thm:quasi-periodic-GAD-1}
      There exists
      $z^{\delta} \in \R^{N}$ with $\norm{z^{\delta}} \leq M \delta$
      such that $\nabla^{2} E^{\delta}(z^{\delta})$ has repeated
      eigenvalues $\lambda_{1} = \lambda_{2}$ and $\nabla
      E^{\delta}(z^{\delta}) \in {\rm span}\{ e_1, e_2 \}$, where $e_i$
      are the eigenvectors corresponding to $\lambda_i$.

   \item \label{thm:quasi-periodic-GAD-2}
      For all $x_{0} \in \R^{N}$ such that
      $\|x_{0} - z^{\delta}\| = \eps/\sqrt{2 \cos \alpha^{0}}$
      (cf. Lemma \ref{th:isotropic-stable-orbit})
      there exists $v_{0} \in S_{1}$ such that the $\eps$-GAD
      (\ref{eq:GAD}) for
      $E_\delta$ with initial conditions satisfying
      $\norm{x(0) - x_{0}} \leq m \eps(\delta + \eps)$ and
      $\norm{v(0) - v_{0}} \leq m \eps (\delta + \eps), {v(0)} \in S_{1}$, admits a
      unique solution, and
      \begin{align*}
         \abs{\norm{x(t) -
         z^{\delta}} - \frac{\eps}{\sqrt{2 \cos \alpha^{0}}}}\leq
         M \eps(\eps + \delta)
         \qquad \text{for all $t \geq 0$}.
      \end{align*}
   \end{enumerate}
\end{thm}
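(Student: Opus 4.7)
For part (\ref{thm:quasi-periodic-GAD-1}), the plan is to invoke the implicit function theorem. I would define a map $F\colon \R^N \times \R \to \R^N$ whose first two components express that the $2\times 2$ block of $\nabla^2 E^\delta(z)$ corresponding to its two lowest eigenvalues is a scalar multiple of the identity (two trace-free conditions extending the 2D analysis of Section \ref{sec:isd:general} to the multidimensional setting), while the remaining $N-2$ components require that the projection of $\nabla E^\delta(z)$ onto the complementary subspace vanishes. At $(z,\delta) = (0,0)$ the form \eqref{eq:multiD-E0} of $E^0$ gives $F = 0$, since $\nabla^2 E^0(0)$ is block diagonal with $\lambda^0 I_2$ on $V_{\rm s}$ and $H^0$ on $V_{\rm c}$ and $\nabla E^0(0) \in V_{\rm s}$. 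Because $E^0$ has no cubic coupling between $V_{\rm s}$ and $V_{\rm c}$, the Jacobian $\partial_z F(0,0)$ is block diagonal: on $V_{\rm s}$ it reduces to the matrix $A$ of \eqref{eq:defn-A} associated with the cubic $\tfrac12(x_1^3 + x_1 x_2^2)$, which in this normalization is the identity and has $\Delta = 2 \neq 0$; on $V_{\rm c}$ it is $H^0$, invertible by hypothesis. The IFT then produces a smooth branch $\delta \mapsto z^\delta$ with $\norm{z^\delta} \leq M \delta$.

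For part (\ref{thm:quasi-periodic-GAD-2}), I would translate and rotate coordinates so that $z^\delta$ sits at the origin and the two lowest eigenvectors of $\nabla^2 E^\delta(z^\delta)$ span $V_{\rm s}$. Writing $y := x - z^\delta = y_{\rm s} + y_{\rm c}$, the GAD decouples to leading order: the $y_{\rm c}$-dynamics is contracting, with rate bounded below by a fixed multiple of the spectral gap between $H^0$ and $\lambda^0$, while the $y_{\rm s}$-dynamics reduces, to leading order, to the isotropic system \eqref{eq:gad-leading-order-isotropic} with parameter $\alpha^0$, corrupted by errors of size $O(\norm{y}^2 + \delta)$ in the gradient, $O(\norm{y} + \delta)$ in the Hessian, and cross-couplings involving $y_{\rm c}$. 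I then pass to the polar/rescaled variables $y_{\rm s} = \eps r'(\cos\theta,\sin\theta)$, $t = \eps t'$ and the combined angle $\omega = 2\phi - \theta$ from Section \ref{sec:gad:special-case}, which brings the reduced dynamics to the planar system \eqref{eq:GAD-reduction to 2D} plus perturbations. Under the sign conditions on $\alpha^0$ (inherited via Lemma \ref{th:isotropic-stable-orbit}), this system has a hyperbolic stable fixed point $(r_0, \omega_0^\pm)$ with spectral data \eqref{eq:gad:2dsimple:Jpm}. The prescription $\omega(0) = \omega_0^\pm$ determines $v_0$ uniquely from $x_0$: once $\theta(0)$ is fixed by the $V_{\rm s}$-component of $x_0$, it sets $\phi(0) = \tfrac12(\omega_0^\pm + \theta(0))$, and $v_0$ is taken to be the corresponding unit vector in the eigenbasis of $\nabla^2 E^\delta(z^\delta)$.

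What remains is a quantitative persistence argument for this hyperbolic fixed point under the perturbations coming from (i) neglected higher-order terms in $E^0$, contributing $O(\eps)$ in the rescaled variables; (ii) the global perturbation $\delta \Delta E$, contributing $O(\delta)$; and (iii) the coupling to $y_{\rm c}$. I would build a quadratic Lyapunov function $L$ adapted to the stability matrix $J^\pm$ of \eqref{eq:gad:2dsimple:Jpm}, for which the rescaled trajectory satisfies $\dot L \leq -c L + C (\eps + \delta)^2$, and close the argument via Gronwall's inequality to obtain $L(t) \leq C(\eps + \delta)^2$ uniformly in $t \geq 0$. Translated back to the original unscaled variables this yields the claimed bound $\abs{\norm{x(t) - z^\delta} - \eps/\sqrt{2\cos\alpha^0}} \leq M \eps(\eps+\delta)$. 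The main obstacle, in my view, is simultaneously controlling the $V_{\rm c}$ component: its contraction rate is $O(1)$ in the unrescaled time $t$, hence only $O(\eps)$ in the rescaled time $t'$ on which the reduced planar dynamics is analyzed, and since the initial data allows $y_{\rm c}(0) = O(\eps)$, one cannot simply discard it. The Lyapunov estimate must therefore be augmented to track $(r',\omega,y_{\rm c})$ jointly, exploiting that the cross-couplings enter the reduced equations only at order $\eps$ so as not to destabilize the hyperbolic trap over an infinite time horizon.
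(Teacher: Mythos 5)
Your proposal follows essentially the same route as the paper's proof: part 1 is the same implicit-function-theorem argument with the block-diagonal Jacobian ($\Delta = 2$ on $V_{\rm s}$, $H^0$ on $V_{\rm c}$), and part 2 uses the same $V_{\rm s}\oplus V_{\rm c}$ decomposition, the rescaling $x=\eps x'$, $t=\eps t'$, the polar variables with $\omega = 2\phi-\theta$, and linearization about the hyperbolic fixed point of \eqref{eq:GAD-reduction to 2D}; the paper closes the stability estimate with sequential Duhamel bounds on $v_{\rm c}$, $x_{\rm c}$ and $X$ rather than a joint Lyapunov function, but the mechanism you identify --- the $O(\eps)$ contraction of $x_{\rm c}$ in rescaled time being matched by $O(\eps)$ forcing --- is exactly how the paper resolves the difficulty you flag.
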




\section{Conclusion}
In this paper we make two novel contributions to the theory of walker-type
saddle search methods:

{\bf Region of attraction:} In Section \ref{sec:pos} we extended
   estimates on the region of attraction for
   an index-1 saddle beyond perturbative results. Our results give some
   credence to the widely held belief that dimer and GAD type saddle search
   methods converge if started in an index-1 region. But we also show
   through an explicit example that this is not true without some additional
   assumptions on the energy landscape.

   We also highlight the global convergence result of
   Corollary \ref{th:global-convergence}, which we believe can provide a
   useful benchmark problem and testing ground towards a more comprehensive
   convergence theory for practical saddle search methods outside
   a perturbative regime.

{\bf Cycling behavior:} Although it is already known from
   \cite{techreport, weinan2011gentlest} that the dimer and GAD methods
   cannot be expected to be globally convergent, the behavior identified in
    those references is non-generic.
    In Section \ref{sec:singularities} we classify explicitly the
    possible generic singularities and identify a new situation in which global convergence fails, which occurs
    in any dimension and is stable under arbitrary perturbations of
    the energy functional.

    In particular, our results provide a large class of energy
    functionals for which there can be no merit function for which the ISD or
    GAD search directions are descent directions.

\bigskip \noindent

Our results illustrate how fundamentally different saddle
search is from optimization, and strengthen the evidence that dimer or GAD
type saddle search methods cannot be (easily) modified to obtain
globally convergent schemes. Indeed it may even prove impossible to design
a globally convergent walker-type saddle search method.

We speculate that this is related to the difficulty in proving the existence of
saddle points in problems in calculus of variations: while the existence of
minimizers follow in many cases from variational principles, the existence of
saddle points is known to be more difficult, requiring sophisticated
mathematical tools such as the mountain pass theorem. Such theorems are based on
the minimization of functionals of paths, and as such are conceptually closer to
string-of-state methods such as the nudged elastic band and string methods
\cite{Jonsson:1998, Weinan:prb2002}. It is to our knowledge an open question to
establish the global convergence of methods from this class, but the
results of the present paper suggest that it may be a more promising direction to pursue
than the walker type methods.

\appendix

\section{Proof of Theorem
  \ref{th:pos:gad-region-of-attraction} (region of attraction for the GAD)}
\label{appendix:region-of-attraction}
At a fixed configuration $x$, the dynamics on $v$ is a gradient
descent for $\lela v, \nabla^{2}E(x)v\rira$ on the sphere $S_1$ that
ensures the local convergence of $v$ to $v_{1}$. Our strategy is to use an
adiabatic argument to show that the full GAD dynamics keeps $v$ close
to $v_{1}$ even when $x$ is (slowly) evolving.

Assume $\Omega$ is as in the hypotheses of this theorem. Then $\Omega$
is compact, and the minimal spectral gap $g$ of the positive-definite
continuous matrix $(1 - 2 v_{1}(x) \otimes v_{1}(x)) \nabla^{2}E(x)$
satisfies
\begin{align*}
  g &= \min_{x \in \Omega} \min(-\lambda_{1}(x), \lambda_{2}(x))> 0.
\end{align*}
This implies that $\lambda_{2} - \lambda_{1} \geq 2g > 0$ on $\Omega$. Let also
\begin{align*}
  M = \left(\max_{x \in \Omega} \|\nabla^{2}E(x)\|_{\rm
      op} \right).
\end{align*}
For $v \in S_{1}$, we write $P_{v} = v \otimes v$. When
$v, w \in S_{1}$, then we have the following improved Cauchy-Schwarz
equality
\begin{align*}
  \lela v, w\rira &= 1 - \frac 1 2 \norm{v - w}^{2}
  \end{align*}
and bound on projectors
  \begin{align*}
  \norm{P_{v} - P_{w}}_{\rm op} &=\norm{(v - w) \otimes
                                              (v+w)}_{\rm op} \leq 2 \norm{v - w}.
\end{align*}

\paragraph{Step 1: variations of $\norm{\nabla E}$.}
For any $t$ such that $x(t) \in \Omega$, we compute (dropping the dependence
on $x(t)$, and writing $H = \nabla^{2} E$)
\begin{align}
  \notag
    \frac{d}{dt} \frac12 \| \nabla E \|^2
    &=
      - \< (I-2P_{v})H \nabla E, \nabla E \> \\
    \notag &=
      - \< (I-2 P_{v_{1}})H \nabla E, \nabla E \>
      + \< (P_{v} - P_{v_{1}}) H \nabla E, \nabla E \> \\
    &\leq
      (-g + 2 M \norm{v - v_{1}}) \| \nabla E \|^2.
      \label{ineq_dE}
  \end{align}

\paragraph{Step 2: variations of $\norm{v - v_{1}}$.}
Similarly, when $x \in \Omega$, we compute
  \begin{align*}
    \frac{d}{dt} \frac12  \norm{v - v_{1}}^2 &= \lela v -
                                                    v_{1}, \dot v -
                                               \dot v_{1}\rira\\
    &=- \frac 1 {\eps^{2}} \lela v -
                                                    v_{1},(I - P_{v})
                                               H v\rira - \lela v -
      v_{1}, \dot v_{1}\rira.
  \end{align*}
  Our goal is \eqref{ineq_v-v1} below, which shows that the leading
  term in this expression is bounded by
  $-\eps^{-2} g \norm{v - v_{1}}^{2}$, which will pull back $v$ to
  $v_{1}$ when $\eps$ is small enough.

  We bound both terms separately. For the first term, we note that
\begin{align*}
  (I - P_{v}) H v &= (I - P_{v_{1}}) H (v-v_{1})+ (P_{v_{1}} - P_{v}) H
                          v_{1} +
                    (P_{v_{1}} - P_{v}) H (v - v_{1}) \\
  &= (I - P_{v_{1}}) H (v-v_{1}) +
    \lambda_{1}(v_{1} - \lela v, v_{1}\rira v)+
                          (P_{v_{1}} - P_{v}) H (v - v_{1}) \\
  &= (I - P_{v_{1}}) H (v-v_{1}) -
    \lambda_{1}\left(v - v_{1} - \frac 1 2 \norm{v - v_{1}}^{2}v\right)+
                          (P_{v_{1}} - P_{v}) H (v - v_{1}),
\end{align*}
hence it follows that
\begin{align}
  \notag
  -\eps^{-2}\lela v - v_{1}, (I - P_{v}) H v\rira
  &\leq -\eps^{-2}\lela v - v_{1}, ((I - P_{v_{1}}) H - \lambda_{1} I)(v -
    v_{1})\rira  \\
    & \qquad \notag
      + \eps^{-2} \left(\frac 1 2 \lambda_{1} + 2M\right)\norm{v - v_{1}}^{3}\\
  &\leq -\eps^{-2} g \norm{v - v_{1}}^{2} + \eps^{-2}
                                                \left(\frac 1 2 \lambda_{1}
                                                + 2M\right)\norm{v -
    v_{1}}^{3}.
  \label{eq:bound_2}
\end{align}

For the second term, standard eigenvector perturbation theory yields
    \begin{displaymath}
      \dot{v}_1 = - (H- \lambda_1)^+ \dot{H} v_1
      = - (H-\lambda_1)^+ \nabla^3 E(x)[\dot{x}, v_1],
    \end{displaymath}
    where $(H-\lambda_1)^+$ is the Moore--Penrose pseudo-inverse of
    $H-\lambda_{1}$, defined by
    \begin{align*}
      (H - \lambda_{1})^{+} v_{1} = 0 \qquad \text{and} \qquad
      (H - \lambda_{1})^{+} v_{i} = \frac{1}{\lambda_{i} - \lambda_{1}}
                                    v_{i}, \text{ for $i > 1$.}
    \end{align*}
    It follows that $\dot v_{1} \leq g^{-1}\norm{\nabla^3
      E(x)[\dot{x}, v_1]}$ and then, from $\norm{\dot x} \leq L,$
\begin{align}
  \label{eq:bound_1}
  \abs{\lela v - v_{1}, \dot v_{1}\rira}
   &\leq \frac L g \left(\max_{x \in \Omega}
     \norm{\nabla^{3}E(x)[v_{1}(x)]}_{\rm op}\right) \norm{v - v_{1}}.
\end{align}

Estimates \eqref{eq:bound_2} and \eqref{eq:bound_1} imply the
existence of constants
$C_{1}, C_{2} > 0$ such that, when $x \in \Omega,$
\begin{align}
      \frac{d}{dt} \frac12  \norm{v - v_{1}}^2 &\leq \frac 1 {\eps^{2}}\left(- g +
                                                 C_{1} \norm{v - v_{1}}\right)\, \norm{v -
                                                 v_{1}}^{2} + C_{2}
                                                 \norm{v - v_{1}}.
                                                 \label{ineq_v-v1}
\end{align}

\paragraph{Step 3: conclusion.}
Let
\begin{align*}
  \delta_{0} = \frac 1 2 \min\left(\frac g {C_{1}}, \frac g {2M}\right)
  \qquad \text{and} \qquad
  \eps_{0} = \sqrt{\frac{g \delta_{0}}{4 C_{2}}}.
\end{align*}
Then, for $x \in \Omega$ and $\norm{v - v_{1}} \leq \delta_{0}$,  (\ref{ineq_dE})
implies that  $\norm{\nabla E}$ is decreasing. If, in addition, $\eps < \eps_{0}$
and $\delta_{0}/2 < \norm{v - v_{1}}$, then, (\ref{ineq_v-v1}) implies that
$\norm{v - v_{1}}$ is decreasing as well.

Let $(x,v)$ be the maximal solution of the GAD equations on an
interval $[0, T_{c})$ with initial conditions as in the Theorem, and
let
\begin{align*}
  T_{0} &= \inf\{t \in [0, T_{c}), x(t) \not \in \Omega \text{ or } \norm{v(t) -
          v_{1}(x(t))} > \delta_{0}\} > 0.
\end{align*}

Assume $T_{0} \neq T_{c}$. Then, at $T_{0}$, either
$\norm{\nabla E(x(T_{0}))} \geq L$, in contradiction with (\ref{ineq_dE}), or
$\norm{v(T_{0}) - v_{1}(x(T_{0}))} \geq \delta_{0}$, in contradiction with
(\ref{ineq_v-v1}). We can conclude, in particular, that $x(t) \in \Omega$ for all time. Since
$\Omega$ is bounded there cannot be blow-up in finite time, hence
$T_c = +\infty$.

Since $\nabla E(x(t)) \to 0$, and $x(t)$ is bounded, a subsequence converges to
a critical point $x_* \in \Omega$ which must be an index-1 saddle. Since index-1
saddles are locally attractive for the GAD (see
\cite{techreport,ZhangDu:sinum2012} for proofs), the exponential convergence
rate follows.

\section{Proof of Theorem \ref{thm:quasi-periodic-GAD} (quasi-periodic
  solutions)}
\label{appendix:quasi-periodic}
\subsection{Perturbation of the energy functional}
\label{sec:rigorous:perturbation}
We prove part 1 of Theorem \ref{thm:quasi-periodic-GAD}.
Heuristically, the statement is true since
imposing a zero gradient on $V_{\rm c}$ imposes $N-2$ constraints, while
imposing equal eigenvalues on $V_{\rm s}$ imposes $2$ constraints; cf.
\S~\ref{sec:isd:general} where we showed that singularities are
generically isolated in 2D. By
varying the location of the singularity ($N$ degrees of freedom) and adapting
the system of coordinates, we can put the perturbed energy functional in
the same functional form as $E^0$, except for a perturbation of
$\alpha,\lambda, H$ and of the third-order coefficients $G$.
The latter introduces an $O(\delta)$ coupling at third order
between the subspaces $V_{\rm s}$ and $V_{\rm c}$. Making this precise is the
content of the following
lemma, which also establishes
the first assertion of Theorem \ref{thm:quasi-periodic-GAD}.

For the remainder of this section let $(e_{i}^{0})_{i\in I}$ be the
canonical basis vectors of $\R^N$, and $z^{0} = 0 \in \R^{N}$ the
location of the singularity with $\delta = 0$.

\begin{lemma}[Perturbation of singularity] \label{th:transform-lemma}
   Under the conditions of Theorem \ref{thm:quasi-periodic-GAD}
   there exists $\delta_{0} > 0, C >0$ such that, for every
  $\delta < \delta_{0}$, there exist $\alpha^{\delta}, \lambda^{\delta},H^{\delta},G^{\delta}, z^{\delta}$ and a new
  orthonormal basis $(e_{i}^{\delta})_{i \in I}$ such that, with
  $\tilde x = z^{\delta} + \sum_{i=1}^{N} x_{i} e_{i}^{\delta}$,
   \begin{align*}
     E^{\delta}(\tilde x) &= \norm{\nabla E^{\delta}(z)}(\cos \alpha^{\delta} x_{1} + \sin \alpha^{\delta} x_{2}) \\
         & \qquad  + \frac {\lambda^{\delta}} 2 (x_{1}^{2}
            + x_{2}^{2}) + \frac 1 2 \sum_{i,j \in I_{\rm c}}^{N} H_{ij}^{\delta} x_{i} x_{j} \\
         & \qquad +\sum_{i,j,k \in I} G^{\delta}_{ijk} x_{i}
            x_{j} x_{k}  + O(\norm{x}^{4}),
   \end{align*}
   and moreover,
   \begin{align*}
      \max_{i,j,k \in I}\left(\|z^{\delta} - z^{0}\|, |e_{i}^{\delta} - e_{i}^{0}|,   |\alpha^{\delta} - \alpha^{0}|,
      |\lambda^{\delta} - \lambda^{0}|,|H_{ij}^{\delta} - H_{ij}^{0}|,
      |G_{ijk}^{\delta} -
      G_{ijk}^{0}|, |\norm{\nabla E^{\delta}(z)} - 1|\right) \leq C \delta.
   \end{align*}
\end{lemma}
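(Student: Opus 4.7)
The strategy is an implicit function theorem argument, in the spirit of the discriminant calculation of \S\ref{sec:isd:general} but performed in $\R^N$. The singularity conditions split as $N$ scalar equations for the $N$ unknowns $z \in \R^N$: the first two eigenvalues of $\nabla^2 E^\delta(z)$ must coincide (2 conditions) and $\nabla E^\delta(z)$ must lie in the associated 2D eigenspace ($N-2$ conditions). Because the hypothesis $H^0 > \max(0,\lambda^0) I$ ensures a spectral gap between $\lambda^0$ and the eigenvalues of $H^0$, the spectral projector $P(z,\delta)$ onto the two lowest eigenvectors of $\nabla^2 E^\delta(z)$ is smooth near $(0,0)$. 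Choose smooth orthonormal frames $u_1(z,\delta), u_2(z,\delta)$ of $V_{\rm s}(z,\delta) := \Ran P(z,\delta)$ and $u_3,\dots,u_N$ of $V_{\rm c}(z,\delta) := V_{\rm s}(z,\delta)^\perp$ with $u_i(0,0) = e_i$, and define $F\colon \R^N \times \R \to \R^N$ by
\begin{align*}
  F_1 &= \lela u_1, \nabla^2 E^\delta(z) u_1 \rira - \lela u_2, \nabla^2 E^\delta(z) u_2 \rira, \\
  F_2 &= \lela u_1, \nabla^2 E^\delta(z) u_2 \rira, \\
  F_j &= \lela u_j, \nabla E^\delta(z) \rira, \qquad j \in I_{\rm c}.
\end{align*}
The explicit form of $E^0$ gives $F(0,0) = 0$.

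The crux is invertibility of $\partial_z F(0,0)$. Using $\lela u_i,\partial_{z_k} u_i\rira = 0$ and $\lela u_1,u_2\rira = 0$, the frame-derivative contributions in $\partial_z F_1, \partial_z F_2$ cancel, leaving
\begin{displaymath}
   \partial_{z_k} F_1(0,0) = E^0_{11k} - E^0_{22k}, \qquad \partial_{z_k} F_2(0,0) = E^0_{12k}.
\end{displaymath}
For $F_j$ with $j \in I_{\rm c}$, the resolvent formula for $P$ yields
\begin{displaymath}
   P(0,0)\,\partial_{z_k} u_j(0,0) = \frac{1}{\lambda^0 - \mu_j}\,P(0,0)\,\nabla^3 E^0(0)[e_k,e_j],
\end{displaymath}
which vanishes because $E^0$ contains no mixed cubic terms $E^0_{ilm}$ with $i \in I_{\rm s}$ and $l \in I_{\rm c}$. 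Hence $\partial_{z_k} F_j(0,0) = H^0_{jk}$ for $k \in I_{\rm c}$ and $0$ for $k \in I_{\rm s}$. Reading off $E^0_{111} = 3, E^0_{112} = 0, E^0_{122} = 1, E^0_{222} = 0$ from \eqref{eq:multiD-E0} gives the block-diagonal Jacobian
\begin{displaymath}
   \partial_z F(0,0) = \begin{pmatrix} 2 & 0 & 0 \\ 0 & 1 & 0 \\ 0 & 0 & H^0 \end{pmatrix},
\end{displaymath}
which is invertible because $H^0$ is positive definite. The implicit function theorem produces a smooth branch $\delta \mapsto z^\delta$ with $z^0 = 0$, $F(z^\delta,\delta) = 0$ and $\|z^\delta\| \leq C\delta$.

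For the orthonormal basis, set $e_j^\delta := u_j(z^\delta,\delta)$ for $j \in I_{\rm c}$. Since $F(z^\delta,\delta) = 0$ implies $\nabla^2 E^\delta(z^\delta)|_{V_{\rm s}} = \lambda^\delta I$, every orthonormal basis of $V_{\rm s}(z^\delta,\delta)$ is automatically an eigenbasis, so we use this rotational freedom to choose $(e_1^\delta, e_2^\delta)$ such that $\nabla E^\delta(z^\delta) = \|\nabla E^\delta(z^\delta)\|\bigl(\cos\alpha^\delta\, e_1^\delta + \sin\alpha^\delta\, e_2^\delta\bigr)$, which is well-defined because $F_j(z^\delta,\delta) = 0$ for $j \in I_{\rm c}$ ensures $\nabla E^\delta(z^\delta) \in V_{\rm s}(z^\delta,\delta)$. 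Taylor-expanding $E^\delta$ about $z^\delta$ in this basis yields the claimed form, with $\lambda^\delta$ the common eigenvalue on $V_{\rm s}$, $H^\delta_{ij}$ the entries of $\nabla^2 E^\delta(z^\delta)$ in the frame $\{e_j^\delta\}_{j \in I_{\rm c}}$, and $G^\delta_{ijk}$ the corresponding third-order coefficients. Smoothness of $\delta \mapsto z^\delta$, of the spectral projector, and of the chosen frames delivers the stated $O(\delta)$ bounds.

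The main obstacle is establishing the clean block-diagonal form of $\partial_z F(0,0)$: this hinges on the special structure of $E^0$, in that the absence of $V_{\rm s}$--$V_{\rm c}$ mixed cubic terms kills both the upper-right block (through $\partial_{z_k} F_1, \partial_{z_k} F_2$ for $k \in I_{\rm c}$) and the lower-left block (through the resolvent formula for $\partial_{z_k} P$). Once this structural fact is in hand, the IFT together with smooth dependence of all secondary quantities on $\delta$ routinely delivers the remainder of the statement.
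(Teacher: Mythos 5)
Your proof follows essentially the same implicit-function-theorem strategy as the paper: both set up the same $N$ scalar conditions (equal diagonal entries and vanishing off-diagonal of $\nabla^2 E^\delta(z)$ on $V_{\rm s}$, plus $\nabla E^\delta(z)\perp V_{\rm c}$) via a smooth frame adapted to the spectral projector, and both show $\partial_z F(0,0)$ is block-diagonal and invertible because $E^0$ has no mixed $V_{\rm s}$--$V_{\rm c}$ cubic terms. The only notable cosmetic difference is that the paper constructs the frame explicitly by L\"owdin-orthonormalizing $P(z,\delta)e_i^0$ and $(I-P(z,\delta))e_i^0$, whereas you leave the choice abstract; also, your displayed resolvent identity with a single $\mu_j$ implicitly treats $H^0$ as diagonal (in general one gets a sum over the spectral decomposition of $H^0$), but the conclusion $P(0,0)\partial_{z_k}u_j(0,0)=0$ still holds for the reason you give, so the argument is intact.
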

\begin{proof}
  We need to determine a new origin $z$ and a new orthogonal basis
  $(e_{i})_{i \in I}$ that are $O(\delta)$-close to $z^{0}$ and $e_{i}^{0}$,
  such that $\nabla E^{\delta}(z) \in {\rm span} \{ e_1, e_2 \}$,
  and $e_{1}$ and $e_{2}$ are eigenvectors of $\nabla^{2} E^{\delta}(z)$
  associated with equal (smallest) eigenvalue.


  \paragraph{Step 1: construction of the $(e_{i})_{i \in I}$.}
  Let $R$ be the distance between $\lambda^{0}$ and the next-lowest
  eigenvalue in the spectrum of $\nabla^{2} E^{0}(0)$. Let $\gamma$ be
  the circular contour in the complex plane centered on $\lambda^{0}$
  and of radius $R/2$. For any $z, \delta$ small enough,
  \begin{align*}
    P(z,\delta) &= - \frac 1 {2 \pi i}\oint_{\gamma} (\nabla^{2}
                  E^{\delta}(z) - y)^{-1} dy
  \end{align*}
  is a projector of rank 2 and
  $C^{2}$ with respect to both $z$ and $\delta$. $P(z,\delta)$ projects onto
  the eigenspaces of
  $\nabla^{2} E^{\delta}(z)$ associated to the (at most two) eigenvalues in
  $[\lambda^{0} - R/2, \lambda^{0} + R/2]$.

  Next, we define
  \begin{align*}
    \tilde e_{i}(z, \delta) &=
                              \begin{cases}
                                P(z,\delta) e_{i}^{0} &\text{ if
                                  $i \in I_{\rm s}$,}\\
                                (I - P(z,\delta))e_{i}^{0}
                                &\text{ if $i \in I_{\rm c}$.}
                              \end{cases}
  \end{align*}
  with overlap matrix $O_{ij} = \lela \tilde e_{i}, \tilde e_{j}\rira$. For
  $z$, $\delta$ sufficiently small, $\tilde{e}_i$ are well-defined and
  $O_{ij}$ is positive definite, hence we can define
  \begin{align*}
    e_{i} &= \sum_{j=1}^{N} (O^{-1/2})_{ij} \tilde e_{j}.
  \end{align*}
  One can readily check that $(e_{i})_{i=1,\dots,N}$ is an orthonormal basis, of
  class $C^{2}$ with respect to $z$, and that the basis vectors satisfy
  $\norm{e_{i} - e_{i}^{0}} \leq C (\delta + \norm z)$, provided that $\delta$,
  $z$ are sufficiently small. Moreover, since $O_{ij} = 0$ for $i \in I_{\rm s},
  j \in I_{\rm c}$, we have that $e_{1}, e_{2} \in \text{Ran}(P(z,\delta))$ and
  therefore $(e_{1},e_{2})$ are a basis of $\text{Ran}(P(z,\delta))$.

  Differentiating $\lela e_{i}, e_{j} \rira =
  \delta_{ij}$ with respect to $z$, we obtain
  \begin{align}
    \label{diff_ortho}
    \lela e_{i}, {{\nabla_z} e_{j}}\rira + \lela
    e_{j}, {{\nabla_z} e_{i}}\rira = 0 \qquad {\text{for all $i, j \in I$.}}
  \end{align}

  \paragraph{Step 2: construction of $z$.}
  We seek $z \in \R^{N}$, near $z^0 = 0$, satisfying the $N$ equations
  \begin{align}
    \label{eq_ev2}
    \lela e_{1}, \nabla^{2}E^{\delta}(z)
        e_{2}\rira &= 0,\\
    \label{eq_ev1}
    \lela e_{1}, \nabla^{2}E^{\delta}(z)
        e_{1}\rira - \lela e_{2}, \nabla^{2}E^{\delta}(z)
        e_{2}\rira &= 0,\\
        \label{eq_ev3}
    \lela e_{i}, \nabla E^{\delta}(z)\rira &= 0
    \qquad \text{ for $i\in I_{\rm c}$.}
  \end{align}
  Equation \eqref{eq_ev2} combined with $\nabla^{2} E^{\delta}(z) e_{i} \in
  \text{Ran}(P(z,\delta)) \perp e_{j}$ for $i \in I_{\rm s}, j \in I_{\rm c}$
  ensures that $e_{1}$, $e_{2}$ are eigenvectors of $\nabla^{2}E^{\delta}(z)$,
  and equation (\ref{eq_ev1}) ensures that the two associated eigenvalues are
  the same.

  We write this set of equations as $F(z, \delta) = 0$. $F$ is a $C^{2}$ map
  from a neighborhood of the origin of $R^{N} \times \R$ to $\R^{N}$, with
  $F(0,0) = 0$.  From (\ref{diff_ortho}) we obtain that the Jacobian with respect
  to $z$ of this system of $N$ equations at $(z, \delta) = (0, 0)$, in the basis
    $(e_{1}^{0}, e_{2}^{0}, \dots, e_{N}^{0})$,  is
  \begin{align*}
    \frac{\partial F}{\partial z}(0,0) &=
                                         \begin{pmatrix}
                                           G_{111}^{0} - G_{122}^{0} & G_{112}^{0} -
                                           G_{222}^{0} & 0\\
                                           G_{112}^{0} & G_{122}^{0} & 0\\
                                           0 & 0 & H
                                         \end{pmatrix},
  \end{align*}
  We therefore obtain that
  \begin{align*}
    \det \left(\frac{\partial F}{\partial z}(0,0)\right) &= \Delta
                                                           \det H,
  \end{align*}
  with
  \begin{align*}
  \Delta &= (G_{111}^{0}
           G_{122}^{0}+ G_{112}^{0} G_{222}^{0}) - \left((G_{112}^{0})^{2} + (G_{122}^{0})^{2}\right) = 2.
  \end{align*}

  Since we assumed that $H$ is positive definite, it follows that
  $\frac{\partial F}{\partial z}(0,0)$ is invertible. 
  From the implicit
  function theorem, for any $\delta$ small enough, {there exists $z^{\delta}$ in an
    $O(\delta)$ neighborhood of $0$ satisfying $F(z,\delta) = 0$, and
    the result follows.}
\end{proof}

\subsection{The GAD dynamics}
We are now ready to prove the second assertion of Theorem \ref{thm:quasi-periodic-GAD}.

\paragraph{Step 1: decoupling of the singular and converging dynamics.}
We use Lemma \ref{th:transform-lemma} to change variables
\begin{align*}
  x &= z^{\delta} + \sum_{i=1}^{N} x_{i}' e_{i}^{\delta}, \qquad
  v = \sum_{i=1}^{N} v_{i}' e_{i}^{\delta}
\end{align*}
and then drop the primes and dependence on $\delta$ for the sake of convenience of notation.
For $\delta$ small enough, we set
\begin{align*}
  E(x) &= E^{\delta}\left(z + {\textstyle \sum_{i=1}^{N}} x_{i} e_{i}\right)\\
  &= \norm{\nabla E(0)}(\cos \alpha x_{1} + \sin \alpha x_{2})
                + \frac {\lambda} 2 (x_{1}^{2}
                  + x_{2}^{2}) + \frac 1 2 \sum_{i,j \in I_{\rm c}} H_{ij} x_{i} x_{j} \\
                & \qquad +\sum_{i,j,k \in I} G_{ijk} x_{i}
                  x_{j} x_{k}
                + O(\norm{x}^{4}),
\end{align*}
with $\nabla E(0) \neq 0$, $\sin \alpha > 0$, $H > \lambda$ and
$H > 0$. We decompose $x = x_{\rm s} + x_{\rm c}$, and similarly
$v = v_{\rm s} + v_{\rm c}$.  We call $P_{\rm s}$ and $P_{\rm c}$ the associated
projectors onto the spaces $V_{\rm s}, V_{\rm c}$

We expand the GAD equations (\ref{eq:GAD}) to leading order in $x$,
\begin{align*}
  \dot x_{\rm s} &= -(1 - 2 v_{\rm s}\otimes v_{\rm s}) P_{\rm s} \nabla
                   E(0) + O(\norm x)\\
  \eps^{2} \dot v_{\rm s} &= -\left[\left(\lambda - \lambda\norm{v_{\rm s}}^{2} -
                        \lela v_{\rm c}, H v_{\rm c} \rira\right)
                            v_{\rm s}+ P_{\rm s}G[x,v] -
                            G[x,v,v] v_{\rm s} \right] +
                            O(\norm x^{2})\\
  \dot x_{\rm c} &= - (1 - 2 v_{\rm c}\otimes v_{\rm c}) H x_{\rm c} + 2 \lela v_{\rm s},
               \nabla E(0)\rira v_{\rm c} + 2 \lambda \lela v_{\rm s},
               x_{\rm s}\rira v_{\rm c} + O(\norm{x}^{2})\\
  \eps^{2} \dot v_{\rm c} &= -\left[ \left(H-\lambda \norm{v_{\rm s}}^{2} -
                        \lela v_{\rm c}, H v_{\rm c} \rira\right)
                            v_{\rm c} + P_{\rm c}G[x,v]-
                            G[x,v,v] v_{\rm c}\right] +
                            O(\norm x^{2}).
\end{align*}
From the 2D case, we guess the re-scaling $x = \eps x'$, $t = \eps t'$.
Further, since we expect $v_{\rm c}$ to be small, it is convenient to rescale it
as well by $v_{\rm c} = \eps v_{\rm c}'$. For convenience, we drop the primes
again in the following equations, and we obtain
\begin{align}
  \label{eq:xs}
  \dot x_{\rm s} &= -(1 - 2 v_{\rm s}\otimes v_{\rm s})\nabla E(0) + O(\eps)\\
  \label{eq:vs}
    \dot{v}_{\rm s} &= - (I - v_{\rm s} \otimes v_{\rm s}) P_{\rm s}
    G(x, v_{\rm s}) + O(\eps)\\
  \label{eq:xc}
  \dot x_{\rm c} &= - \eps H x_{\rm c} + 2 \eps \lela v_{\rm s}, \nabla E(0)
               \rira v_{\rm c}+ O(\eps^{2})\\
  \label{eq:vc}
  \eps \dot v_{\rm c} &= - (H-\lambda) v_{\rm c} - P_{\rm c} G[x,v_{\rm s}] + O(\eps)
\end{align}
In these equations and in what follows, the notation $O$ is
understood for with a uniform constant, as long as $x$ and $v_{\rm s}$ remain
bounded:
a term $f(x_{\rm s}, v_{\rm s}, x_{\rm c}, v_{\rm s})$ is $O(\eps^{n})$ if
for every $R > 0$, there is $K > 0$ such that, when
$\norm{x} \leq R, \norm{v_{\rm s}} \leq R$, then $|f(x_{\rm s},
v_{\rm s}, x_{\rm c}, v_{\rm s})| \leq K \eps^{n}$.

Because in \eqref{eq:vc}
$P_{\rm c} G[x,v_{\rm s}] = O(\delta)$, we expect that the
restoring force of the $-(H-\lambda) v_{\rm c}$ term will force
$v_{\rm c}$ to be $O(\eps + \delta)$. In turn, this will make the
$\lela v_{\rm s}, \nabla E(0) \rira v_{\rm c}$ term in \eqref{eq:xc}
to be $O(\eps + \delta)$, and the restoring force of the
$-H x_{\rm c}$ term will make $x_{\rm c}$ to be $O(\eps +
\delta)$. This will decouple the dynamics on $V_{\rm s}$ from that on
$V_{\rm c}$: expanding for $x_{\rm c}$ small, we get
\begin{align*}
    \dot x_{\rm s} &= -(1 - 2 v_{\rm s}\otimes v_{\rm s}) P_{\rm s} \nabla E(0) + O(\eps)\\
  \dot v_{\rm s} &= - (I - v_{\rm s} \otimes v_{\rm s}) P_{\rm s}
    G[x_{\rm s}, v_{\rm s}] +  O(\eps + \norm{x_{\rm c}}).
\end{align*}
We now study these two equations separately, using the computations of
Section \ref{sec:singularities} in the 2D case.

\paragraph{Step 2 : linearization of the singular dynamics.}
We pass to angular coordinates as in the 2D case:
$x_{\rm s} = r(\cos \theta, \sin \theta)$,
$v_{\rm s} = \norm{v_{\rm s}}(\cos\phi, \sin \phi)$. Noting that
$\norm{\nabla E(0)} = 1 + O(\delta)$, the $\dot x_{\rm s}$ and
$\dot v_{\rm s}$ equations become
\begin{align*}
\dot r &= \cos(2 \phi - \alpha - \theta) + O(\eps+\delta)\\
r \dot \theta &= \sin(2 \phi - \alpha - \theta) + O( \eps+\delta)\\
\dot \phi &= r\sin(2 \phi - \theta) + O(\eps + \delta + \norm{x_{\rm c}})
\end{align*}
As in the 2D case, we introduce $\omega = 2\phi - \theta$,
\begin{align*}
  r_{0} = \sqrt{\frac 1 {2 \cos \alpha}}, \qquad
  \omega_{0}^{\pm} = \alpha \pm \frac \pi 2
  \qquad \text{and} \qquad
  J^{\pm} =
      \begin{pmatrix}
        0 & \mp 1\\
        \pm 4 \sin \alpha & \mp \frac{2 \sin \alpha}{\sqrt{2 \cos \alpha}}
      \end{pmatrix}.
\end{align*}
We choose the stable solution $\omega_0 \in \{\omega^{\pm}\}$ with associated
Jacobian $J \in \{ J^\pm\}$, and linearize about the corresponding
$X_{0} = (r_{0}, \omega_{0})$. Denoting $X = (r - r_{0}, \omega - \omega_{0})$,
 we obtain
\begin{equation}
  \label{eq:previous equation}
  \begin{split}
    \dot X &= J X + O(\eps + \delta + X^{2} + \norm{x_{\rm c}})\\
    \dot \phi &= - r\cos(2 \phi - \theta) + O(\eps + \delta + \norm{x_{\rm c}})
  \end{split}
\end{equation}
where $J$ is negative definite.

\paragraph{Step 3 : stability.} 
Let
\begin{align*}
  \Omega = \big\{(X, \phi, x_{\rm c}, v_{\rm c})\, |\, \norm{X} \leq \sqrt{ \eps + \delta}, \phi \in \R,
\norm{x_{\rm c}} \leq 1, \norm{v_{\rm c}} \leq 1 \big\}.
\end{align*}
From \eqref{eq:previous equation}, and from the $\dot x_{\rm c}, \dot v_{\rm c}$
equations \eqref{eq:xc} and \eqref{eq:vc}, writing out fully the remainder
terms as $f_{X}, f_{\phi}, f_{x_{\rm c}}$ and $f_{v_{\rm c}}$, we obtain the system (for $\eps$ and $\delta$ sufficiently small)
\begin{equation} \label{eq:bigproof:X-eqn}
   \begin{split}
  \dot X &= J X + f_{X}(X,\phi,x_{\rm c},v_{\rm c})\\
  \dot \phi &= - r\cos(2 \phi - \theta) + f_{\phi}(X,\phi,x_{\rm c},v_{\rm c})\\
  \frac 1 \eps \dot x_{\rm c} &= - H x_{\rm c} + f_{x_{\rm c}}(X,\phi,x_{\rm c},v_{\rm c})\\
  {\eps} \dot v_{\rm c} &= - (H- \lambda) v_{\rm c} + f_{v_{\rm c}}(X,\phi,x_{\rm c},v_{\rm c})
  \end{split}
\end{equation}
where
$f_{X}, f_{\phi}, f_{x_{\rm c}}$ and $f_{v_{\rm c}}$ are $C^{1}$ functions
satisfying
\begin{align*}
  |f_{X}(X,\phi,x_{\rm c},v_{\rm c})| &\leq \frac {C_{f}} 2 (\eps + \delta + X^{2} +
                                x_{\rm c})\\
  &\leq C_{f} (\eps + \delta +
                                x_{\rm c})\\
  |f_{\phi}(X,\phi,x_{\rm c},v_{\rm c})| &\leq C_{f} (\eps + \delta + x_{\rm c})\\
  |f_{x_{\rm c}}(X,\phi,x_{\rm c},v_{\rm c})| &\leq C_{f} (\eps +
                                                \delta +v_{\rm c})\\
  |f_{v_{\rm c}}(X,\phi,x_{\rm c},v_{\rm c})| &\leq C_{f} (\eps + \delta)
\end{align*}
when $(X,\phi,x_{\rm c},v_{\rm c}) \in \Omega$ for some $C_{f} > 0$.



Our assumptions on the initial data entail that
\begin{align*}
  \norm{X(0)} \leq \eps + \delta, \quad \phi \in \R, \quad
  \norm{x_{\rm c}(0)} \leq \eps + \delta,  \quad
  \text{and} \quad
  \norm{v_{\rm c}(0)} \leq \eps + \delta.
\end{align*}
Let $(X,\phi, x_{\rm c}, v_{\rm c})$ be a maximal solution in $[0,
T_{\rm c})$. Let also
\begin{align*}
T_{\Omega} = \sup\{T \in [0,T_{\rm c}),
(X,\phi,x_{\rm c},v_{\rm c}) \in \Omega\}.
\end{align*}

Since $H > \lambda$, $\|e^{-(H - \lambda) t}\| \leq C e^{-c t}$ for
some $C > 0, c > 0$. Thus, using Duhamel's formula for the $v_{\rm c}$ equation
we obtain for all $t \in [0,T_{\Omega}]$ that
\begin{align*}
  v_{\rm c}(t) &= e^{-\frac {H-\lambda} {\eps} t} v_{\rm c}(0) + \frac
                 1 \eps \int_{0}^{t}
  e^{- \frac{H-\lambda} {\eps}(t-t')} f_{v_{\rm c}}(X(t'),\phi(t'), x_{\rm c}(t'),
  v_{\rm c}(t'))dt', \\
  \norm{v_{\rm c}(t)} &\leq C e^{-\frac c \eps t} \norm{v_{\rm c}(0)} + \frac{C C_{f}(\eps+\delta)}\eps
                    \int_{0}^{t} e^{-\frac c \eps (t-t')} dt'\\
  & \leq C (\eps + \delta) + \frac{C C_{f}}c(\eps+\delta).
\end{align*}
This shows that $\norm{v_{\rm c}(t)} \leq K (\eps+\delta)$ for all
$t \in [0,T_{\Omega})$.

Analogously, applying Duhamel's formula to the $x_{\rm c}$ equation, using $H > 0 I$,
we obtain that $\norm{x_{\rm c}(t)} \leq K' (\eps+\delta)$.

Applying Duhamel's formula a third time, to the $X$ equation, and using $J < 0
I$, we obtain $\norm{X} \leq K''(\eps + \delta)$. This shows that, for $\eps,
\delta$ small enough, $T_{\Omega} = T_{\rm c}$ and therefore $T_{\Omega} =
T_{\rm c} = +\infty$.

We have therefore shown that, whenever
$\norm{X(0)} \leq \eps + \delta, \phi(0) \in \R, \norm{x_{\rm c}(0)} \leq
\eps + \delta, \norm{v_{\rm c}(0)} \leq
\eps + \delta$, then there exists a unique global solution to
\eqref{eq:bigproof:X-eqn} and that
$\norm{X(t)} \leq K'' (\eps+\delta),\norm{x_{\rm c}(t)} \leq K'(\eps+\delta), \norm{v_{\rm c}(t)}
\leq K(\eps+\delta)$ for all $t \in \R^{+}$.

Returning to the original variables and inverting the rescaling
$x = z + \eps \sum_{i=1}^{N} x_{i}' e_{i}$, $v_{\rm c} = \eps
\sum_{i=1}^{N} (v_{\rm c}')_{i} e_{i}$ $v = \sum_{i = 1, 2} (v_{\rm
  s}')_{i} e_{i} + \sum_{i > 2} (v_{\rm c}')_{i} e_{i}$ completes the proof.

\end{document}